\theoremstyle{plain}
\newtheorem{theorem}{Theorem}[section]
\newtheorem{lemma}[theorem]{Lemma}
\newtheorem{proposition}[theorem]{Proposition}
\newtheorem*{conjecture*}{Conjecture}
\newtheorem{corollary}[theorem]{Corollary}
\theoremstyle{definition}
\newtheorem{definition}[theorem]{Definition}
\theoremstyle{remark}
\newtheorem{example}[theorem]{Example}
\newtheorem{remark}[theorem]{Remark}
\numberwithin{equation}{section}
\newcommand{\R}{\mathbb R}
\newcommand{\Z}{\mathbb Z}
\newcommand{\rH}{\mathrm H}
\newcommand{\e}{\mathbf e}
\DeclareMathOperator{\Area}{Area}
\DeclareMathOperator{\rk}{rank}
\DeclareMathOperator{\dist}{dist}
\begin{document}

\title[THE NUMBER OF VERTICES OF A TROPICAL CURVE IS BOUNDED BY ITS AREA]{THE NUMBER OF VERTICES OF A TROPICAL CURVE \\ IS BOUNDED BY ITS AREA}
\author{Tony Yue YU}
\address{Tony Yue YU, Institut de Mathématiques de Jussieu, CNRS-UMR 7586, Case 7012, Université Paris Diderot - Paris 7, Bâtiment Sophie Germain 75205 Paris Cedex 13 France}
\email{yuyuetony@gmail.com}
\date{June 14, 2013 (Revised on December 25, 2014)}
\subjclass[2010]{14T05 52B05}

\begin{abstract}
We introduce the notion of tropical area of a tropical curve defined in an open subset of $\mathbb R^n$.
We prove that the number of vertices of a tropical curve is bounded by the area of the curve. The approach is totally elementary yet tricky. Our proof employs ideas from intersection theory in algebraic geometry. The result can be interpreted as the fact that the moduli space of tropical curves with bounded area is of finite type.
\end{abstract}

\maketitle

\section{Introduction and statement of the theorem} \label{sec:intro}

We begin with some heuristic motivations from algebraic geometry.
Let $X$ be a complex projective space.
The moduli space of algebraic curves embedded in $X$ with bounded area with respect to the Fubini-Study metric is of finite type, because the Hilbert schemes are of finite type \cite{Grothendieck_Techniques_1995}.
This article tries to establish an analogous result of finiteness in tropical geometry.
Some combinatorial techniques of this paper are used to study the compactness of tropical moduli spaces in \cite{Yu_Tropicalization_2014} (see also \cite{Gross_Logarithmic_2013,Nishinou_Toric_2006} for related finiteness results).

\begin{theorem}\label{thm: main}
Let $A$ be a positive real number, $U$ an open subset of $\R^n$ for $n\ge 2$, and $K\subset U$ a compact subset.
There exists an integer $N$, such that for any tropical curve $G$ in $U$ with area bounded by $A$, the number of vertices of $G$ inside $K$ is bounded by $N$.
\end{theorem}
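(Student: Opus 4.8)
The plan is to reduce the $n$-dimensional statement to the classical two-dimensional picture, where the bound is an instance of the duality between a planar tropical curve and the subdivision of its Newton polygon. Recall that for a tropical curve in $\R^2$ the vertices are in bijection with the two-dimensional cells of the dual subdivision, each of which is a lattice polygon of area at least $\tfrac12$; since these cells subdivide the Newton polygon, whose area is the tropical area, one gets at once that the number of vertices is at most twice the area. This is the elementary intersection-theoretic input I would build on: a vertex cannot be created for free, it always consumes a definite amount of dual area. The whole difficulty is to carry this over to $\R^n$ and, more seriously, to make the resulting bound \emph{uniform}, i.e.\ depending only on $A$, $U$ and $K$ and not on the individual curve $G$.

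First I would set up the projection machinery. For each of the finitely many coordinate planes I consider the linear projection $\pi\colon\R^n\to\R^2$ and the push-forward $\pi_*G$, which is again a balanced weighted one-dimensional complex, i.e.\ a planar tropical curve. The two facts I need are: (i) the sum over all coordinate projections of the planar tropical areas of $\pi_*G$ is bounded by a constant depending only on $n$ times $\Area(G)$, which I expect to follow from the definition of the tropical area together with a Cauchy-type comparison of the area of a two-dimensional object with the areas of its coordinate shadows; and (ii) every genuine vertex of $G$ survives as a genuine vertex of $\pi_*G$ for at least one choice of projection. Fact (ii) is where balancing enters: at a vertex the primitive edge directions are pairwise distinct and positively span at least a plane, so they cannot become parallel after projection to \emph{every} coordinate plane; some projection keeps two of them independent, and balancing then prevents the vertex from flattening into an interior point of an edge.

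The role of the compact inclusion $K\subset U$, and the point where the argument becomes delicate, is the control of clusters. Set $\delta=\dist(K,\partial U)>0$. A vertex lying in $K$ cannot have all of its adjacent edges short: an edge either joins it to another vertex or leaves $U$ and hence has length at least $\delta$. By the balancing condition the edges at a vertex do not all point into a common half-space, so following them outward forces the curve to spread across a region of diameter comparable to $\delta$. I would use this to convert a purely combinatorial count of vertices into genuine area: even when many vertices accumulate in a tiny ball with very short interconnecting edges, the edges that must eventually exit $K$ organize, under projection, into a planar curve whose dual subdivision still has one cell per vertex, and (i) bounds the total cell area by $\Area(G)$. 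Thus the number of such vertices is bounded independently of how tightly they cluster.

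The main obstacle, as indicated above, is precisely this uniformity against clustering: a naive local lower bound asserting that each vertex contributes at least a fixed amount of area is false, because short edges make the local area arbitrarily small, and no bound depending on $G$ is allowed. The remedy, and the heart of the proof, is the global intersection-theoretic count: short edges do not save area in the dual picture, since each vertex still corresponds to a distinct lattice cell of area at least $\tfrac12$, and distinct cells cannot overlap. Carrying out this clustering analysis rigorously — verifying that the dual cells coming from distinct vertices of $G$ remain distinct after projection, and that their total area is genuinely controlled by $\Area(G)$ rather than merely by the Euclidean size of $G$ — is the step I expect to require the most care, and it is here that the tricky but elementary combinatorics of the balancing condition does the real work. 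Taking $N$ to be twice the constant from (i) times $A$ then yields the theorem.
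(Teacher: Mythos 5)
There is a genuine gap, and it sits at the very center of your argument: the ``elementary intersection-theoretic input'' conflates two different notions of area. The duality between a planar tropical curve and the subdivision of its Newton polygon bounds the number of vertices by twice the \emph{lattice area of the Newton polygon}, a degree-like combinatorial quantity; the hypothesis of the theorem bounds the \emph{metric} area $\sum_e |e|\cdot|w_e|$ of Definition \ref{def: area}. These are not equal and neither controls the other in general: a tripod intersected with a tiny disk has arbitrarily small tropical area, yet its dual cell still has lattice area $\tfrac12$. Worse, a tropical curve in an open subset $U\subsetneq\R^2$ (or the restriction $G_{|K}$) has no Newton polygon and no dual subdivision at all --- that duality requires a curve cut out by a tropical polynomial on all of $\R^2$, and restriction destroys balancing at the boundary. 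The identity you want is exactly what the paper must work to establish: $\Area(G)=d$ (the intersection number with the faces of the simplex) holds only for \emph{saturated} curves (Proposition \ref{prop: area}), and producing a saturated curve from an arbitrary one is the saturation trick of Section \ref{sec: saturation trick}, which costs the additive error $n(A/\delta)^2$. That no purely planar bound of the form $\#V(G)\leq C\cdot\Area(G)$ can hold is shown by the paper's own example (Section \ref{sec:example}): truncating that curve gives finite tropical curves in the open square of area at most $14$ with arbitrarily many vertices. Hence any correct proof must use $\delta=\dist(K,\partial U)$ quantitatively; in your sketch $\delta$ appears only in the clustering paragraph, whose conclusion (``the number of such vertices is bounded independently of how tightly they cluster'') is a restatement of the theorem rather than a derivation, and indeed your final bound ``twice the constant from (i) times $A$'' contains no $\delta$ at all, which is impossible.

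The projection step has a second, independent failure: collapsing of vertices. Your fact (ii) is fine pointwise (if every coordinate $2\times 2$ minor of two edge directions vanishes, the directions are parallel, so some coordinate projection keeps each individual star non-collinear), but the count requires more, namely that distinct vertices of $G$ yield distinct vertices (distinct dual cells) of $\pi_*G$, and this is false: take vertices stacked along a line parallel to $\ker\pi$, e.g.\ horizontal tripods at $(0,0,t_1),\dots,(0,0,t_N)$ joined by short vertical edges into a single balanced curve of small area. All $N$ vertices map to one point of $\pi_{12,*}G$, where the positive weights simply add, producing one vertex and one dual cell; the other two coordinate projections flatten every one of these stars. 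So $\sum_\pi \#V(\pi_*G)$ stays bounded while $\#V(G)=N$ grows, and no bound on the planar curves can recover $\#V(G)$ without controlling fiber cardinality --- which is the whole difficulty, not a technical verification. The paper's proof avoids projections entirely: it covers the (saturated) curve by $d$ monotone paths (Section \ref{sec: path}, Lemma \ref{lem: collection of paths}), bounds the vertices met by each path via the global balancing Lemma \ref{lem: global balancing} applied to moving walls (Lemma \ref{lem: bound on one path}), and so arrives at $\#V(G)\leq 2(n-1)^2d^2$ --- quadratic in the degree, not linear, which is another sign that the linear bound you aim for is not what this method can deliver.
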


Let us explain some of the terminologies used above.

\begin{definition}
Let $\Z/2\Z$ act on $\Z^n\setminus 0$ by multiplication by $-1$, and denote the quotient by $W$. For any $w\in W$, we define its \emph{norm} $|w|=\sqrt{\sum (w^i)^2}$ for some representative $(w^1,...,w^n)\in\Z^n\setminus 0$. We do the same construction for $\mathbb Q^n\setminus 0$, and denote the quotient by $W_{\mathbb Q}$.
\end{definition}

\begin{definition}\label{def: tropical curve}
A \emph{tropical curve} $G$ in an open subset $U\subset\R^n$ is a finite one-dimensional polyhedral complex in $U$ satisfying the following conditions.
\begin{enumerate}[(i)]\itemindent=5mm
\item $G$ is closed in $U$ as a topological subspace. We call the 0-dimensional faces of $G$ \emph{vertices}, and the one-dimensional faces of $G$ \emph{edges}. The set of vertices is denoted by $V(G)$; the set of edges is denoted by $E(G)$. There are two kinds of edges: those edges which have both endpoints in $U$ are called \emph{internal edges}; while the rest are called \emph{unbounded edges}.
\item Each vertex of $G$ is at least 3-valent.
\item Each edge $e$ is equipped with a \emph{weight} $w_e\in W$ parallel to the direction of $e$ inside $\R^n$. If $w_e$ is $k$ times a primitive integral vector, we call $|k|$ the \emph{multiplicity} of the edge $e$.
\item \label{item: balancing condition}We require that the \emph{balancing condition} holds, i.e. for any vertex $v$ of $G$, we have $\sum_{e\ni v} \widetilde w_e = 0$, where the sum is taken over all edges containing $v$ as an endpoint, and $\widetilde w_e$ is the representative of $w_e$ which points outwards from $v$.
\end{enumerate}
\end{definition}

\begin{remark}
The balancing condition in Definition \ref{def: tropical curve}(\ref{item: balancing condition}) is a necessary condition for a tropical curve $G$ to be the amoeba of an analytic curve \cite{Mikhalkin_Enumerative_2005,Nishinou_Toric_2006,Speyer_Uniformizing_2007,Baker_Nonarchimedean_2011}. It is generalized to a global non-toric setting in \cite{Yu_Balancing_2013} using vanishing cycles in $k$-analytic étale cohomology.
\end{remark}

\begin{definition}
For any open subset $V\subset U$, we denote by $G_{|V}$ the restriction of $G$ to $V$.
\end{definition}

\begin{definition}\label{def: area}
For an edge $e$ of a tropical curve $G$, we define its \emph{tropical area} as
\[\Area (e)=|e|\cdot|w_e| ,\]
where $|e|$ means the Euclidean length of the segment $e$ in $\R^n$, and $|w_e|$ is the norm of the weight $w_e$.
The \emph{tropical area} of a tropical curve $G$ is by definition the sum of the tropical areas of all its edges.
In this article, tropical area is simply called \emph{area} for short.
\end{definition}

\begin{example}
Let $e$ be an edge connecting the point $0=(0,\dots,0)$ to the point $x=(x_1,\dots,x_n)$, and let $\widetilde w_e=(w^1,\dots,w^n)\in\Z^n\setminus 0$ be a representative of the weight $w_e$. By definition, there exists $\lambda\in\R$ such that $x=\lambda\cdot \widetilde w_e$. We have
\[\Area(e)=|\lambda| \cdot \sum_{i=1}^n (w^i)^2 .\]
\end{example}

\begin{remark}
To the best of our knowledge, the notion of tropical area in Definition \ref{def: area} did not appear in the existing literature.
It corresponds to the symplectic area under the tropical limit.
There are many ways to see this.
For example, we can explain it in the framework of Berkovich spaces \cite{Berkovich_Spectral_1990}.
Let $k$ be a non-archimedean field with nontrivial valuation, $X$ a closed $k$-analytic annulus of inner radius $r_1$ and outer radius $r_2$, and $f\colon X\rightarrow (\mathbb G_\text{m}^{\text{an}})^n$ a $k$-analytic morphism.
Let $\tau\colon (\mathbb G_\text{m}^{\text{an}})^n\rightarrow\R^n$ denote the tropicalization map taking coordinate-wise valuations.
Suppose that the image $(\tau\circ f)(X)$ is an edge $e$ of the tropical curve $G$.
Put $\omega=\tau^*\big(\sum_{i=1}^n \mathrm d' x_i\wedge \mathrm d'' x_i\big),$
which is a $(1,1)$-form on $(\mathbb G_\text{m}^{\text{an}})^n$ in the sense of \cite{Chambert-Loir_Formes_2012}.
One computes that $\int_X f^*\omega = |w_e|^2\cdot\log\frac{r_2}{r_1}$.
Since $|\omega_e|\cdot\log\frac{r_2}{r_1}=|e|$, we obtain that $\int_X f^*\omega = |w_e|\cdot |e| = \Area(e).$
\end{remark}


Having introduced all the notions, we now explain the proof.
Intuitively, if we regard tropicalization as a classical limit from strings to particles, then the balancing condition resembles a conservation of momentum.
The idea of the proof is to cover our tropical curve by a collection of paths (Section \ref{sec: path}), thought of as paths of particles, and then try to bound the number of vertices on each path (Section \ref{sec: tropical vertex bound}).

We begin by observing that the balancing condition defined locally around each vertex has the following global consequence.

\begin{lemma}\label{lem: global balancing}
Let $G$ be a tropical curve in an open subset $U\subset\R^n$, and let $W$ be an open subset of $U$ such that
\begin{enumerate}[(i)]\itemindent=5mm
\item $\overline W\subset U $;
\item $\overline W$ is a smooth manifold with corners;
\item $V(G)\cap\partial W=\emptyset$;
\item $G$ intersects $\partial W$ transversely.
\end{enumerate}
For each edge $e$ of $G$ that intersects $\partial W$, 
let $\widetilde w_e$ denote the representative of the weight $w_e$ pointing from the inside of $W$ to the outside.
Then we have
\begin{equation}\label{eq: global balancing}
\sum_{e\cap\partial W\neq\emptyset} \widetilde w_e=0 .
\end{equation}
\end{lemma}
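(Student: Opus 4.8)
The plan is to read the balancing condition as a discrete divergence theorem: each coordinate of the edge weights defines a divergence-free integer flow on $G$ (Kirchhoff's law at every vertex), and equation \eqref{eq: global balancing} simply asserts that the net flow across $\partial W$ vanishes because $W$ contains no unbalanced vertices, hence no sources or sinks. To turn this into an elementary computation I would first reduce to the case where every edge meets $\partial W$ at most once. Since $\overline W$ is compact and contained in $U$ while $G$ is a finite polyhedral complex, only finitely many edges meet $\partial W$; by transversality (condition (iv)) each meets it in finitely many points, none of which is a vertex (condition (iii)). I would subdivide each such edge by inserting an auxiliary $2$-valent vertex at every intersection point. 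The balancing condition holds trivially at such a point, since the two collinear outgoing pieces carry opposite weight representatives, so after the subdivision every edge lies either in $\overline W$ or in its complement, and the inside-to-outside representative $\widetilde w_e$ is unambiguous.

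Let $G_W$ denote the subcomplex consisting of the (subdivided) edges lying in $\overline W$. Its vertices are of two types: the honest vertices of $G$ lying in the open set $W$, and the crossing points lying on $\partial W$. I would then consider the double sum
\[ S=\sum_{v\in V(G_W)}\ \sum_{f\ni v}\widetilde w_f^{\,(v)}, \]
where $\widetilde w_f^{\,(v)}$ is the representative of the weight of the piece $f$ pointing outward from the endpoint $v$. Reorganizing this sum piece by piece, each piece $f$ of $G_W$ has both endpoints in $V(G_W)$, and the two outward representatives at these endpoints are negatives of one another; hence every piece contributes $0$, and $S=0$.

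Next I would evaluate the same sum $S$ by splitting the vertices into the two types. At each honest interior vertex $v\in W$, every incident edge of $G$ emanates from $v$ into $W$, so all its incident pieces belong to $G_W$ and their outward representatives are exactly the $\widetilde w_e$ appearing in the local balancing condition; the inner sum therefore vanishes by Definition \ref{def: tropical curve}(\ref{item: balancing condition}). At each crossing point $q\in\partial W$, transversality guarantees that exactly one incident piece enters $W$, and the outward representative of that piece at $q$ points from the boundary into $W$, that is, it is the negative of the inside-to-outside representative $\widetilde w_e$. Collecting terms gives $S=-\sum_{e\cap\partial W\neq\emptyset}\widetilde w_e$ (for edges crossing once; in general the right-hand sum runs over crossing points with the signs recorded above). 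Comparing the two evaluations of $S$ yields \eqref{eq: global balancing}.

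The points where I would take the most care, and which I regard as the main obstacle, are the orientation bookkeeping in the telescoping sum and the careful use of transversality and condition (iii) to ensure that each crossing point contributes exactly one interior piece with the correct sign. The geometric content of the lemma, namely that the boundary sum is a rigidified version of the vertex-by-vertex balancing, is entirely captured by the cancellation $S=0$; everything else is a matter of organizing the finite sum.
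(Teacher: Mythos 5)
Your proof is correct, but it takes a genuinely different route from the paper's. The paper argues by geometric induction: it surrounds the vertices of $G$ lying in $W$ by small disjoint balls $B_1,\dots,B_l$ and the edges contained in $W$ by thin cylinders $C_1,\dots,C_m$, observes that the boundary identity holds for the union of balls $B$ (this is exactly the sum of the local balancing conditions), and then shows inductively that the identity persists along the chain $B\subset B\cup C_1\subset\cdots\subset B\cup C_1\cup\dots\cup C_m\subset W$, each cylinder insertion merging two boundary crossings whose outward representatives cancel. You replace this region-growing by a purely combinatorial double count: subdivide $G$ at the crossing points, keep the part $G_W$ inside $\overline W$, and evaluate $\sum_{v}\sum_{f\ni v}\widetilde w_f^{\,(v)}$ once piece-by-piece (each piece contributes $0$ since its two outward representatives are negatives of each other) and once vertex-by-vertex ($0$ at interior vertices by balancing, $-\widetilde w_e$ at each crossing point). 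Both proofs rest on the same cancellation, but yours dispenses with the metric fuss of choosing a radius $r$ small enough that the balls and cylinders stay in $W$ and avoid the other edges, at the price of some subdivision bookkeeping; it also treats edges meeting $\partial W$ in several points more transparently, since each crossing point carries its own sign. One caveat you share with the paper: both arguments implicitly need $\overline W$ to be compact (you assert this, although it is not literally among the hypotheses (i)--(iv)). Without it, an edge of $G$ could run off to infinity inside $W$; the corresponding piece of $G_W$ would have only one endpoint, the telescoping would fail, and in fact the statement itself becomes false --- take $G$ a straight line with no vertices and $W$ an unbounded half-strip that the line exits through $\partial W$ exactly once. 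Since $W$ is bounded in every application in the paper, this is a shared implicit hypothesis rather than a defect of your argument.
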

\begin{proof}
Let $v_1, \dots, v_l$ be the vertices of $G$ inside $W$, $e_1,\dots,e_m$ the edges of $G$ contained in $W$. Let $B_1,\dots,B_l$ be open balls of radius $r>0$ and with center $v_1,\dots,v_l$. Let $C_1,\dots,C_m$ be open cylinders of radius $r$ and with central axis $e_1,\dots, e_m$. We choose $r$ small enough so that the closures of the balls and the cylinders do not intersect nearby edges and that all of them are contained in $W$. Let $B=\bigcup_{i=1}^l B_i$. We consider a chain of open subsets of $U$ verifying (i)-(iv):
\[B\subset B\cup C_1 \subset B\cup C_1\cup C_2 \subset \dots \subset B\cup C_1\cup\dots\cup C_m \subset W .\]
The equation (\ref{eq: global balancing}) holds for $B$ by the definition of the balancing condition. Then we show by induction that (\ref{eq: global balancing}) holds for every open set in the chain above, and in particular holds for $W$.
\end{proof}

Next, we note that it suffices to prove Theorem \ref{thm: main} in a particular situation.

Let $K'$ be the $n$-simplex obtained as the convex hull of the $n+1$ points $(0,\dots,0)$, $(1,0,\dots,0)$, $\dots$,$(0,\dots,0,1)$ in $\R^n$, and let $U'_\delta$ be the interior of the convex hull of the $n+1$ points $(-\delta,-\delta,\dots,-\delta)$, $(1+3\delta,-\delta,-\delta,\dots,-\delta)$, $(-\delta,1+3\delta,-\delta,\dots,-\delta)$, $\dots$, $(-\delta,\dots,-\delta,1+3\delta)$, where $\delta$ is a positive real number.
Let $U, K$ be the open subset and the compact subset in the statement of Theorem \ref{thm: main}.
For any point $x\in K$, we can find a pair $(U_x, K_x)$ which is isomorphic to $(U'_\delta, K')$ for some $\delta>0$ up to a similarity transformation, such that $x$ is in the interior $K^\circ_x$ of $K_x$ and that $U_x$ is included in $U$.
By the compactness of $K$, there is a finite subset $\{x_1,\dots,x_m\}\subset K$ such that
\[K\subset \bigcup_{i=1}^m K_{x_i}^\circ\subset\bigcup_{i=1}^m K_{x_i}\subset\bigcup_{i=1}^m U_{x_i}\subset U.\]
Therefore, we can deduce Theorem \ref{thm: main} from the following particular situation.

\begin{theorem}\label{thm: main1}
Let $A$ be a positive real number. Let $K', U'_\delta$ be the compact subset and the open subset of $\R^n$ defined as above.
Put $K=K'$, $U=U'_\delta$.
Let $K^\circ$ denote the interior of $K$.
There exists an integer $N$ such that for any tropical curve $G$ in $U$ with area bounded by $A$, the number of vertices of $G_{|K^\circ}$ is bounded by $N$.
\end{theorem}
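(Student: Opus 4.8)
The plan is to follow the two-step strategy announced in the introduction: first cover $G_{|K^\circ}$ by a controlled family of paths, and then bound the number of vertices that can lie on any single path. Two rigid features of the situation should do the work. First, integrality of the weights forces $|w_e|\ge 1$ for every edge, so that $\Area(e)=|e|\,|w_e|\ge|e|$; thus area dominates Euclidean length. Second, the simplex geometry provides a definite gap $d=\dist(K,\partial U)>0$, so any edge running from $K^\circ$ out to $\partial U$ has length at least $d$ and hence area at least $d$.

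First I would fix a generic linear functional $\ell(x)=\langle u,x\rangle$ with $|u|=1$ and orient every edge in the direction of increasing $\ell$. Projecting the balancing condition $\sum_{e\ni v}\widetilde w_e=0$ onto $u$ shows that at each vertex the incoming and outgoing $u$-flux $|\langle u,\widetilde w_e\rangle|$ agree; genericity of $u$ keeps every edge off the level sets, so no interior vertex is a local source or sink and the orientation is acyclic. The positive $u$-flux is then a conservation law on the oriented graph, and decomposing it writes $G_{|K^\circ}$ as a union of $\ell$-monotone paths, each running from one boundary crossing to another. Since every edge carries strictly positive flux, every vertex of $G_{|K^\circ}$ lies on such a path, and it suffices to bound (a) the number of paths that reach the window and (b) the number of vertices on one path.

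For (a) I would use a slicing estimate in the spirit of intersection theory. Writing $m(t)$ for the number of edges meeting $\{\ell=t\}$ inside $K^\circ$, an edge $e$ joining $a_e$ to $a_e+s_e\widetilde w_e$ contributes to $m(t)$ precisely on a set of $t$ of measure $|s_e|\,|\langle u,\widetilde w_e\rangle|$, so $\int_\R m(t)\,dt=\sum_e|s_e|\,|\langle u,\widetilde w_e\rangle|$. Bounding $|\langle u,\widetilde w_e\rangle|\le|w_e|$ and using $|s_e|\,|w_e|=|e|\le\Area(e)$, this integral is at most $A$. As the relevant $t$ range over an interval of length at most $\operatorname{diam}K$, only boundedly many edges can cross a generic level; feeding this into the flow picture, together with the observation that each path threading $K^\circ$ must spend length $\ge d$ reaching $\partial U$ and so be charged at least $d$ against the area, should bound the number of paths in terms of $A$ and the fixed geometry.

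The delicate point, and the step I expect to be the main obstacle, is (b): a single $\ell$-monotone path may pass through arbitrarily many vertices joined by arbitrarily short edges, and such a cluster costs almost no area. The resolution must again be integrality. The weight turns by an integer vector at each vertex, so a cluster of short edges is dual to a lattice configuration, and its vertex count is controlled by the number and weights of the edges that eventually leave a fixed-size neighbourhood---each of which has length $\ge d$ and is charged against the area. Making this precise amounts to a Pick-type bound asserting that a balanced integral trivalent graph with prescribed outgoing edges has boundedly many interior vertices. I would therefore expect the final $N$ to be polynomial, rather than linear, in $A$; this is consistent with the degree-$d$ tropical curve, whose roughly $d^2$ vertices are sustained by roughly $d$ unbounded edges and hence by area of order $d$.
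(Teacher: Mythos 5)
Your overall architecture parallels the paper's: decompose the curve into monotone paths using the balancing condition (the paper does this with integer capacities along coordinate directions, Lemma \ref{lem: collection of paths}), bound the number of paths by charging each a definite amount of area (the paper's Proposition \ref{prop: flow} and Corollaries \ref{cor: bound on intersection nb}, \ref{cor: bound on weight}), and then bound the number of vertices on a single path. But the last step --- which you yourself flag as ``the main obstacle'' --- is precisely the heart of the theorem, and your proposal contains no argument for it. The ``Pick-type bound'' you appeal to (a balanced integral graph with prescribed outgoing edges has boundedly many interior vertices) is not an available off-the-shelf fact: it is essentially the theorem itself localized to a small ball. A cluster of vertices joined by arbitrarily short edges, with only a few edges escaping, is exactly the degeneration that must be excluded; its area can be made arbitrarily small, so area does not exclude it, and Euler-characteristic counting does not either, since abstract trivalent graphs with a fixed number of legs can have arbitrarily many vertices (arbitrary genus). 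What excludes it is the balanced-embedding constraint, and extracting a quantitative vertex bound from that constraint is the whole difficulty; invoking it as a lemma is circular.

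The step you are missing is the paper's Lemma \ref{lem: bound on one path}. Fix a path $P$ monotone in direction $i$; for each vertex $Q$ of $P$ consider an adjacent edge $e(Q)\not\subset P$ with a nonzero component in some direction $j\neq i$. For each such $j$, the paper takes an $\epsilon$-shifted copy of the graph of the path, forms the region $R_{j-}$ it bounds, and applies the global balancing condition (Lemma \ref{lem: global balancing}) to the resulting surface $T_{j-}$: the total $j$-flux through $T_{j-}$ equals the degree $d$, and every edge $e(Q)$ pointing in the negative $j$-direction is forced to cross it, giving $S^-_{P,j}\le d$ and symmetrically $S^+_{P,j}\le d$. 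Integrality of the weights then converts this flux bound into the vertex count $\#V_0(P)\le 2d(n-1)$, and a separate covering lemma (Lemma \ref{lem: cover by paths}) shows the sets $V_0(P_{ik})$ exhaust $V(G)$. Note also that this argument is run for a \emph{saturated} curve, where area equals the intersection number $d$ with every slice; a general curve must first be modified by the saturation trick of Section \ref{sec: saturation trick}, with the weight bounds of Section \ref{sec: bound on the weights by area} controlling the added area --- a reduction absent from your proposal. Two smaller points: your claim that $\int m(t)\,dt\le A$ forces ``only boundedly many edges'' across a \emph{generic} level is incorrect (an integral bound controls the average, not the generic value), and charging path length against area must account for several paths sharing an edge, which the paper handles via the multiplicity bookkeeping $|q|\le 1$ in Lemma \ref{lem: collection of paths}.
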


The proof of Theorem \ref{thm: main1} consists of two parts. The first part (Sections \ref{sec: interpretation of area} - \ref{sec: tropical vertex bound}) treats the case where we have a nice interpretation of the area of a tropical curve as intersection numbers; the second part (Sections \ref{sec: bound on the weights by area} - \ref{sec: saturation trick}) explains how to reduce the general case to the case considered in the first part via a certain modification.
In Section \ref{sec:example}, we give an example to better illustrate Theorem \ref{thm: main}.

\bigskip
\paragraph{\textbf{Acknowledgments}} I am very grateful to Maxim Kontsevich, Bernhard Keller, Antoine Chambert-Loir and Olivier Debarre for discussions and comments.

\section{Interpretation of the area as intersection numbers}\label{sec: interpretation of area}

Let $K$ be as in Theorem \ref{thm: main1}.
In this section, we study a particular type of tropical curves in $K^\circ$, called saturated tropical curves.
We prove that in this case, the area is equal to certain intersection numbers.

The boundary $\partial K$ of $K$ is a simplicial complex of dimension $n-1$. We denote by $(\partial K)^{n-2}$ its skeleton of dimension $n-2$.

\begin{definition}\label{def: saturated}
A tropical curve $G$ in $K^\circ$ is said to be \emph{saturated} if $\overline{G}\cap(\partial K)^{n-2}=\emptyset$ and if $\overline{G}$ intersects $\partial K\setminus(\partial K)^{n-2}$ perpendicularly, where $\overline{G}$ denotes the closure of $G$ in $\R^n$ as a topological subspace.
\end{definition}

\begin{remark}
The word ``saturated'' is used because in this case, the area is concentrated in $K$ in some sense.
\end{remark}

Now let $G$ be a saturated tropical curve in $K^\circ$.
For an intersection point between $\overline G$ and $\partial K$, we define its \emph{multiplicity} to be the multiplicity of the corresponding edge of $G$.

\begin{proposition}\label{prop: d}
The balancing condition implies that $\overline{G}$ intersects each face of $\partial K$ by the same number of times (counted with multiplicity as defined above), which we denote by $d$.
\end{proposition}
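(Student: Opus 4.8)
The plan is to apply the global balancing condition of Lemma~\ref{lem: global balancing} to (a slight inward shrinking of) the simplex $K$. Since $G$ is saturated, every edge of $\overline G$ meeting $\partial K$ does so perpendicularly through the relative interior of one of the $n+1$ facets of $K$, and $\overline G$ avoids the corner locus $(\partial K)^{n-2}$ entirely. For each such facet, the perpendicular crossing forces the outward-pointing representative $\widetilde w_e$ of a crossing edge to be parallel to the outward normal of that facet. Reading off the normals of the standard simplex---these are $-\e_i$ for the facet $F_i=\{x_i=0\}$, $i=1,\dots,n$, and $(1,\dots,1)$ for the facet $F_0=\{\sum_i x_i=1\}$, both primitive integral vectors---one gets $\widetilde w_e=m_e\cdot(-\e_i)$ or $\widetilde w_e=m_e\cdot(1,\dots,1)$, where $m_e$ denotes the multiplicity of $e$ (here it is crucial that the two kinds of normal are primitive, so that the multiplicity equals the scalar $m_e$). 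Hence the intersection number $d_i$ of $\overline G$ with $F_i$, counted with multiplicity, is exactly the sum of the $m_e$ over all edges $e$ crossing $F_i$.

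First I would manufacture an open set to feed into Lemma~\ref{lem: global balancing}. One cannot take $W=K^\circ$ directly, since its closure is not compactly contained in $K^\circ$ and the hypotheses are not given a priori on $\partial K$ itself. Instead I would offset each facet inward by a small perpendicular distance $s>0$, obtaining a simplex $K_s\subset K$ similar to $K$ with $\overline{K_s}\subset K^\circ$; this $K_s$ is a smooth manifold with corners. Because the crossing edges are perpendicular to the facets and $G$ is a finite complex, for all sufficiently small $s$ these edges extend at least a distance $s$ into the interior, so that the edges crossing $\partial K_s$, together with their outward weights $\widetilde w_e$, are in bijection with those crossing $\partial K$. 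Choosing $s$ to avoid the finitely many distances realized by vertices gives $V(G)\cap\partial K_s=\emptyset$, so all four hypotheses (i)--(iv) of Lemma~\ref{lem: global balancing} are satisfied.

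Applying the lemma to $W=K_s^\circ$ and grouping the crossing edges by the facet they meet then yields
\[
\sum_{i=1}^n d_i\,(-\e_i)\;+\;d_0\,(1,\dots,1)\;=\;\sum_{e\cap\partial K\neq\emptyset}\widetilde w_e\;=\;0 .
\]
Reading this vector identity in the $i$-th coordinate gives $d_0-d_i=0$ for every $i=1,\dots,n$, so that $d_0=d_1=\cdots=d_n$. Declaring $d$ to be this common value completes the argument.

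The only genuinely delicate point is this reduction to the inward offset $K_s$: one must verify that saturation (the perpendicular crossings) really does make the boundary-crossing data of $K_s$ agree with that of $K$, and that a generic choice of $s$ clears the vertices off $\partial K_s$. Everything else is a direct coordinate computation with the explicit normals of the simplex.
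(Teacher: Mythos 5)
Your proof is correct and follows essentially the same route as the paper: apply Lemma~\ref{lem: global balancing} to an inward offset of the simplex (the paper takes $W=\{x\in K^\circ \,|\, \dist(x,\partial K)>\epsilon\}$, which for a simplex coincides with your facet-offset $K_s$), use saturation to identify the outward weights of crossing edges with integer multiples of the primitive facet normals, and read off $d_1=\dots=d_n=d$ from the resulting vector identity. The paper is terser about verifying the hypotheses of the lemma---it merely chooses $\epsilon$ small enough that no vertices of $G$ lie in the collar $K^\circ\setminus W$---but the content is identical.
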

\begin{proof}
We use Lemma \ref{lem: global balancing}, where we take $U$ to be $K^\circ$ and \[W=\big\{x\in K^\circ \, \big|\, \dist(x,\partial K)>\epsilon\big\}\] for $\epsilon$ a positive number sufficiently small such that $(U\setminus W)\cap V(G) = \emptyset$. For $1\leq i\leq n$, let $d_i$ be the number of intersections (counting with multiplicity) between $\overline{G}$ and the face of $K$ defined by $x_i=0$. Let $d$ be the number of intersections (counting with multiplicity) between $\overline{G}$ and the face of $K$ defined by $x_1+\dots+x_n=1$. Then equation (\ref{eq: global balancing}) means that
\[d_1 \e_1 + d_2 \e_2 + \dots + d_n \e_n = d(\e_1+\dots+\e_n) ,\]
where we denote by $\e_1,\dots, \e_n$ the vectors in $\R^n$ with coordinates $(1,0,\dots,0)$, $\dots$, $(0,\dots,0,1)$ respectively. Therefore we obtain that $d_1=d_2=\dots =d_n=d$.
\end{proof}

\begin{proposition}\label{prop: area}
$\Area (G)=d$.
\end{proposition}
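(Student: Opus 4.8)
The plan is to reduce the area of each edge to an inner product between its displacement vector and its weight, and then perform a summation by parts over the endpoints of the edges, arranged so that the balancing condition kills every interior contribution. Concretely, orient each edge $e$ of $\overline{G}$ by labelling its two endpoints $a_e,b_e$ and choosing the representative $\widetilde w_e$ of $w_e$ pointing from $a_e$ to $b_e$. Since the displacement $b_e-a_e$ is then a positive multiple of $\widetilde w_e$, Definition \ref{def: area} gives the key identity
\[\Area(e)=|e|\cdot|w_e|=\langle b_e-a_e,\,\widetilde w_e\rangle.\]
For an internal edge both $a_e,b_e$ are vertices in $K^\circ$, while for an unbounded edge one endpoint is a vertex and the other is a point $q\in\overline G\cap\partial K$; in either case the edge is a straight segment, so the identity applies to all of $\overline G$.

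Next I would sum this identity over all edges and regroup the terms by endpoint. Writing $\widetilde w_{e,p}$ for the representative of $w_e$ pointing out of $p$ along $e$ (so $\widetilde w_{e,a_e}=\widetilde w_e$ and $\widetilde w_{e,b_e}=-\widetilde w_e$), one checks that $\Area(e)=-\langle a_e,\widetilde w_{e,a_e}\rangle-\langle b_e,\widetilde w_{e,b_e}\rangle$, whence
\[\Area(G)=-\sum_{p}\Big\langle p,\ \sum_{e\ni p}\widetilde w_{e,p}\Big\rangle,\]
the outer sum ranging over all endpoints $p$. At an interior vertex the inner sum $\sum_{e\ni p}\widetilde w_{e,p}$ vanishes by the balancing condition in Definition \ref{def: tropical curve}(\ref{item: balancing condition}), so every interior vertex contributes zero. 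The only surviving terms come from the boundary points, each of which lies on exactly one edge, giving
\[\Area(G)=-\sum_{q\in\overline G\cap\partial K}\langle q,\widetilde w_{e,q}\rangle.\]

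Finally I would evaluate this boundary sum using saturation. At each $q$ the edge meets $\partial K$ perpendicularly, so $\widetilde w_{e,q}$, which points from $q$ back into $K$, is a multiple of the primitive inner normal of the corresponding facet. On a coordinate facet $\{x_i=0\}$ this gives $\widetilde w_{e,q}=m_e\e_i$, while the $i$-th coordinate of $q$ is zero, so $\langle q,\widetilde w_{e,q}\rangle=m_e q_i=0$; thus the $n$ coordinate facets contribute nothing. On the slanted facet $\{x_1+\dots+x_n=1\}$ the inner normal is $-(\e_1+\dots+\e_n)$, so $\widetilde w_{e,q}=-m_e(\e_1+\dots+\e_n)$ and $\langle q,\widetilde w_{e,q}\rangle=-m_e\sum_j q_j=-m_e$. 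Hence $\Area(G)=\sum_q m_e$, the number of intersections of $\overline G$ with this facet counted with multiplicity, which equals $d$ by Proposition \ref{prop: d}.

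The computation is short, and the genuine work is bookkeeping. The main thing to get right is the orientation convention — relating the into-the-edge representative used by the balancing condition to the inner-normal representative at a boundary point — together with checking that the regrouping is legitimate, i.e. that each interior vertex appears with its full balancing sum (so it cancels) and each boundary point belongs to exactly one edge. It is worth highlighting that the argument also explains the value $d$ itself: among the $n+1$ facets only the slanted one survives, precisely because a point on a coordinate facet is orthogonal to the weight of the edge hitting it.
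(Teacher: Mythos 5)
Your proof is correct, but it takes a genuinely different route from the paper's. The paper interprets $\Area(G)$ as the total mass of a measure $\mu$ spread over the coordinate segments $[0,\e_i]$, and computes the density of $\mu$ in each direction by applying the global balancing lemma (Lemma \ref{lem: global balancing}) to the sweeping regions $K^\circ\cap\{x_i<\zeta\}$: the density at level $\zeta$ equals $d$ minus the multiplicities of the slanted-facet intersections already passed, so $\mu_i=\sum_k m^{(k)}\mathbf 1_{[0,z_i^{(k)}]}$, and integrating and summing over $i$ (using $\sum_i z_i^{(k)}=1$) gives $d$. You instead use the position vector as a discrete potential: the identity $\Area(e)=\langle b_e-a_e,\widetilde w_e\rangle$ turns the area into a telescoping sum which, after regrouping by endpoints, is annihilated at every interior vertex by the pointwise balancing condition of Definition \ref{def: tropical curve}(\ref{item: balancing condition}), leaving only boundary terms that saturation evaluates exactly --- zero on the coordinate facets (where the weight is along $\e_i$ and $q_i=0$), and $m_e$ on the slanted facet, summing to $d$. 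Your argument is shorter and needs neither the measure $\mu$ nor Lemma \ref{lem: global balancing}, and it makes transparent why only the slanted facet contributes; the paper's sweeping argument yields strictly more information (the explicit layered profile of the area in each coordinate direction), which matches the flow/path picture exploited later in Sections \ref{sec: path}--\ref{sec: tropical vertex bound}. The two bookkeeping points you flag do hold: each boundary point of a saturated curve lies on exactly one edge (two distinct edges meeting $\partial K$ perpendicularly at the same point would overlap near it, contradicting that $G$ is a polyhedral complex), and for $n\ge 2$ no edge can have both endpoints on $\partial K$ (a segment cannot be perpendicular to two distinct facets of the simplex) --- and even if it could, your regrouping would handle it correctly.
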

\begin{proof}
Let $\e_1,\dots, \e_n$ denote the points in $\R^n$ with coordinates $(1,0,\dots,0)$, $\dots$, $(0,\dots,0,1)$ respectively.
Let $K^1$ be the union of the $n$ segments connecting 0 and $\e_i$, for $i\in\{1,\dots,n\}$. We define a measure $\mu$ on $K^1$. We start with the zero measure on $K^1$. For each edge $e$ of $G$, we add to $\mu$ a measure $\mu_e$ defined as follows.
Let $(\alpha_1,\dots,\alpha_n), (\beta_1,\dots,\beta_n)\in K$ be the two endpoints of $e$ and let $(w^1,\dots,w^n)\in\Z^n\setminus\{0\}$ be a representative of the weight of $e$. We define the restriction of $\mu_e$ to the segment connecting 0 and $\e_i$ to be $\mathbf 1_{[\alpha_i,\beta_i]}\cdot |w^i|\cdot \nu$, where $\mathbf 1_{[\alpha_i,\beta_i]}$ is the characteristic function of the segment $[\alpha_i,\beta_i]\subset \R$, and $\nu$ denotes the one-dimensional Lebesgue measure. Then by Definition \ref{def: area}, the area of $G$ is the total mass of $\mu$. Let us now calculate the measure $\mu$.

Let $z^{(1)},\dots, z^{(l)}$ be the intersection points between $\overline{G}$ and the face of $K$ defined by $x_1+\dots +x_n=1$ with multiplicity $m^{(1)}, \dots, m^{(l)}$ respectively. We have $m^{(1)}+\dots+m^{(l)}=d$ by Proposition \ref{prop: d}. Let $\big(z_1^{(k)},\dots,z_n^{(k)}\big)$ be the coordinates of $z^{(k)}$ for $k=1,\dots,l$. We fix $i\in\{1,\dots,n\}$ and assume that $z_i^{(1)}\leq z_i^{(2)} \leq \dots\leq z_i^{(l)}$. Let $\mu_i$ denote the restriction of $\mu$ to the segment connecting 0 and $\e_i$.

\begin{lemma}
We have
\[\mu_i = \sum_{k=1}^l m^{(k)}\cdot \mathbf 1_{[0, z_i^{(k)}]} \quad\text{almost everywhere.}\]
\end{lemma}
\begin{proof}
Let $z_i^{(0)}=0$, $z_i^{(l+1)}=1$, and $\zeta\in (0,1)$. Assume that there is no vertex of $G$ with $i^\text{th}$ coordinate equal to $\zeta$ and that $z_i^{(j)} < \zeta < z_i^{(j+1)}$, for some $j\in\{0,\dots, l\}$.
Let us show that the density of $\mu_i$ at the point $\zeta\cdot\mathbf e_i$ is $d-\sum_{k=1}^j m^{(k)}$, which we denote by $d_\zeta$. Let $H_\zeta^-$ be the half space $\big\{ (x_1,\dots,x_n)\in\R^n\,\big|\, x_i\leq\zeta\big\}$, $W$ the interior of $K\cap H_\zeta^-$.
Lemma \ref{lem: global balancing} implies that
\[\sum_{p\in G\cap\partial H^-_\zeta} \big|w_{e(p)}^i\big|=d_\zeta,\]
where $e(p)$ denotes the edge of $G$ containing $p$.
So by construction, the tropical curve $G$ contributes $d_\zeta$ to the density of $\mu_i$ at the point $\zeta\cdot\mathbf e_i\in [0,\mathbf e_i]$.
\end{proof}

We continue the proof of Proposition \ref{prop: area}. We calculate the total mass of $\mu$, denoted by $m(\mu)$. We have
\[m(\mu) = \sum_{i=1}^{n} m(\mu_i) = \sum_{i=1}^n\sum_{k=1}^l m^{(k)}\cdot z_i^{(k)} = \sum_{k=1}^l m^{(k)} \sum_{i=1}^n z_i^{(k)} = \sum_{k=1}^l m^{(k)} = d ,\]
completing the proof of the proposition.
\end{proof}

\section{Paths and collection of paths}\label{sec: path}

In this section, we introduce the notion of paths and collection of paths.

Let $R$ be an n-dimensional polyhedron in $\R^n$, $V$ an open subset of $\R^n$ containing $R$.
We fix a direction $i\in\{1,\dots,n\}$.
We assume that $R$ has an $(n-1)$-dimensional face $F$ contained in a hypersurface defined by $x_i=c$, for some $c\in\R$, and that $R$ is contained in the half space $x_i\geq c$. Morally, we can think of the $i^\text{th}$ direction as time, and the rest as space directions. Let $H$ be a tropical curve in $V$ such that there is an edge $e_0$ of $H$ whose interior intersects the relative interior of $F$ transversely.

\begin{definition}\label{def: path}
A \emph{path} $P$ starting from $e_0$ with direction $i$ is a chain of weighted segments $s_0, s_1,\dots,s_{l_P}$ satisfying the following conditions.
\begin{enumerate}[(i)]\itemindent=5mm
\item $s_0=e_0\cap R$, $s_{l_P}=e'_0\cap R$ for some edge $e'_0$ of $H$ such that exactly one endpoint of $e'_0$ does not belong to the interior $R^\circ$.
\item $s_1,\dots,s_{l_P-1}$ are edges of $H$, and $s_1,\dots,s_{l_P-1}\subset R^\circ$.
\item Every two consecutive segments in the chain share one endpoint.
\item \label{def: path: injectivity}The projection to the $i^\text{th}$ coordinate $\R^n\rightarrow \R$ restricted to $P$ is injective.
\item Each segment $s_j$ carries the weight $w'_{s_j} = w_e/|w^i_e|\in W_{\mathbb Q}$, where $e$ is the edge of $H$ containing $s_j$.
\end{enumerate}
\end{definition}

\begin{definition}
A \emph{union} $U$ of $m$ paths $P_1,\dots,P_m$ is a polyhedral sub-complex of $H$, such that
\begin{enumerate}[(i)]\itemindent=5mm
\item Set theoretically $U=\bigcup_{j=1}^m P_j$;
\item Each segment $s$ of $U$ carries the weight $w'_s = k\cdot w_e/|w^{i}_e|\in W_{\mathbb Q}$, where $e$ is the edge of $H$ containing $s$, and $k=\#\big\{j \,\big|\, P_j \text{ contains }s\big\}$.
\end{enumerate}
\end{definition}

\begin{lemma}\label{lem: collection of paths}
Let $m=|w^i_{e_0}|$.
There exists a collection of $m$ paths $P_1,\dots, P_m$ starting from $e_0$ with direction $i$ such that each segment $s$ in the union $U=\bigcup_{j=1}^m P_j$ verifies the following property: 

Let $e$ be the edge of $H$ containing $s$, and let $\widetilde w'_s$ and $\widetilde w_e$ be representatives of the weights $w'_s$ and $w_e$ respectively. By construction $\widetilde w'_s$ and $\widetilde w_e$ are parallel, so there exists $q\in\mathbb Q$ such that $\widetilde w'_s = q \widetilde w_e$. The property is that $|q|\leq 1$. 
\end{lemma}
\begin{proof}
We assign to each edge $e$ of our tropical curve $H$ an integer $c_i(e)$ called capacity (in the $i^\text{th}$ direction). Initially we set $c_i(e)=|w_e^i|$. To construct the path $P_1$, we start with the segment $s_0=e_0\cap R$, and we decrease the capacity $c_i(e_0)$ by 1. Suppose we have constructed a chain of segments $s_0, s_1,\dots,s_j$. Let $B$ be the endpoint of $s_j$ with larger $i^\text{th}$ coordinate. If $B\in\partial R$ we stop, otherwise we choose $e_{j+1}$ to be an edge of $H$ such that:
\begin{enumerate}[(i)]\itemindent=5mm
\item $B$ is an endpoint of $e_{j+1}$.
\item For any point $x\in e_{j+1}\setminus B$, the $i^\text{th}$ coordinate of $x$ is larger than the $i^\text{th}$ coordinate of $B$.
\item The capacity $c_i(e_{j+1})$ is positive.
\end{enumerate}
The existence of such $e_{j+1}$ is ensured by the balancing condition on $H$. After choosing $e_{j+1}$, we decrease the capacity $c_i(e_{j+1})$ by 1 and set $s_{j+1}=e_{j+1}\cap R$. We iterate this procedure until we stop, and we obtain the path $P_1$. We apply the same procedure $m$ times and obtain the collection of paths $P_1, \dots, P_m$ as required in the lemma.
\end{proof}

\section{Tropical vertex bound and genus bound}\label{sec: tropical vertex bound}

Let $K$ be as in Theorem \ref{thm: main1}, and let $G$ be a saturated tropical curve in $K^\circ$ with area $d$ as in Section \ref{sec: interpretation of area}. In this section, we give a very coarse bound on the number of vertices of $G$ in terms of the area $d$ and the dimension $n$.

\begin{proposition}\label{prop: vertex bound}
$\#V(G)\leq 2(n-1)^2 d^2 .$
\end{proposition}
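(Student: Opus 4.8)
The plan is to follow the strategy announced in the introduction: cover $G$ by a collection of monotone paths, bound the number of vertices lying on a single path, and then sum over the paths. For each coordinate direction $i\in\{1,\dots,n\}$ I would apply Lemma \ref{lem: collection of paths} to every edge $e_0$ of $\overline G$ meeting the facet $\{x_i=0\}$. Since $G$ is saturated, such an $e_0$ is perpendicular to that facet, hence parallel to $\e_i$, so $|w^i_{e_0}|=|w_{e_0}|$ equals the intersection multiplicity at $e_0$; by Proposition \ref{prop: d} these multiplicities sum to $d$. Thus for each fixed $i$ one obtains exactly $d$ paths $P^{(i)}_1,\dots,P^{(i)}_d$, all monotone in the $i^{\text{th}}$ coordinate by condition (\ref{def: path: injectivity}) of Definition \ref{def: path}, whose union covers every edge of $G$ with nonzero $i^{\text{th}}$ weight-component. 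Consequently every vertex $v$ lies on at least one of these paths in every direction $i$ for which $v$ is not ``flat'', i.e.\ for which some edge at $v$ has $w^i\neq 0$; since the edge-weights are nonzero, such a direction always exists, so
\[
\#V(G)\ \le\ \sum_{i}\ \sum_{k=1}^{d}\ \#\{\text{vertices of }G\text{ on }P^{(i)}_k\}.
\]

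The heart of the matter, and the step I expect to be the main obstacle, is bounding the number of vertices on one path $P=P^{(i)}_k$. Monotonicity in $x_i$ alone is not enough: a monotone path may oscillate arbitrarily in the remaining coordinates, and since edges can be short while the total area is only $\ge d$, neither length nor area gives a direct count. The mechanism I would use is the balancing condition together with $3$-valence: at each vertex $v$ on $P$ there is, besides the incoming and outgoing edges of $P$, at least one further edge $f_v$. Each such $f_v$ carries nonzero weight, hence a nonzero $j^{\text{th}}$ component for some $j\neq i$ (if $f_v$ were parallel to $\e_i$ one treats the degenerate collinear case separately). I would charge the vertex $v$ to the transverse direction $j$ and argue, via the capacity bookkeeping of Lemma \ref{lem: collection of paths} and the global balancing of Lemma \ref{lem: global balancing}, that the $j^{\text{th}}$ capacity shed along $P$ through these extra edges must ultimately exit $G$ across $\partial K$, where by Proposition \ref{prop: d} the total available $j$-capacity on each facet is $d$. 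Summing over the $n-1$ transverse directions would then yield a bound of the shape $\#\{\text{vertices on }P\}\le 2(n-1)d$.

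Granting such a per-path bound, the proposition follows by multiplication: it suffices to run the construction over $n-1$ of the $n$ coordinate directions (a single direction being discarded so that the remaining ones already detect a non-flat edge at every non-degenerate vertex), giving
\[
\#V(G)\ \le\ (n-1)\cdot d\cdot 2(n-1)d\ =\ 2(n-1)^2 d^2 .
\]
The genuinely delicate points are (i) making the charging of a vertex to a transverse direction injective enough that the capacity argument does not overcount, and (ii) disposing of vertices all of whose edges are parallel to a single coordinate axis, which are invisible to all but one direction; I would handle the latter by a direct collinearity argument showing they contribute nothing beyond the stated bound.
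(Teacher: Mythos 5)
Your overall architecture is the same as the paper's: $d$ monotone paths per coordinate direction obtained from Lemma \ref{lem: collection of paths} together with Proposition \ref{prop: d}, a per-path bound of the shape $2(n-1)d$ obtained by charging vertices to transverse directions via balancing, and a final multiplication over $n-1$ directions. Your flow-to-the-boundary variant of the per-path bound (extend each transverse off-path edge to a monotone path in direction $j$, which can only exit through $\{x_j=0\}$ or through the face $x_1+\dots+x_n=1$, each of total capacity $d$) is a workable substitute for the paper's mechanism, which instead cuts $G$ by a piecewise-linear hypersurface hugging the path and applies Lemma \ref{lem: global balancing} to the region it bounds.

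The genuine gap is in \emph{which} vertices the per-path bound covers. The charging mechanism only counts vertices $Q$ of $P$ that have an off-path edge with nonzero $j$-th weight component for some $j\neq i$ --- the set the paper calls $V_0(P)$. It says nothing about vertices of $P$ all of whose \emph{off-path} edges are parallel to $\e_i$: such vertices are perfectly legitimate (in the plane, a path entering along $(1,1)$ and leaving along $(2,1)$, with a third edge of direction $(-1,0)$, is balanced), they can be numerous (a ``comb'' whose $d-1$ teeth are parallel to $\e_i$ realizes them), and they are invisible to every transverse direction $j$. So your claimed inequality $\#\{\text{vertices on }P\}\le 2(n-1)d$ is not what your argument proves, and your covering step is too weak: merely \emph{lying} on a path does not make a vertex chargeable on that path. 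Moreover your remark (ii) misdiagnoses the obstruction: vertices ``all of whose edges are parallel to a single coordinate axis'' cannot exist at all (at most two non-overlapping edges can be parallel to a given axis, contradicting $3$-valence), so the collinearity argument you defer to treats a vacuous case while missing the real one. The paper's resolution is a separate, essential device: it bounds only $\#V_0(P)$ per path (Lemma \ref{lem: bound on one path}) and then proves $\bigcup_{i\le n-1}\bigcup_{k\le d} V_0(P_{ik})\supset V(G)$ (Lemma \ref{lem: cover by paths}) by a switching argument --- if $v$ is degenerate for the path $P_{ik}$ through it, its $\e_i$-parallel off-path edge $e'$ satisfies $w^i_{e'}\neq 0$, hence lies on some other path $P_{ik'}$, and $v$ belongs to $V_0(P_{ik'})$. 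Without this (or some replacement), the best your outline yields is a per-path bound of $2(n-1)d+2d$ (the extra $2d$ paying for the degenerate vertices by a flow argument in direction $i$ itself), giving $2n(n-1)d^2$ rather than the stated $2(n-1)^2d^2$.
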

\begin{proof}
Let $(x_1,\dots,x_n)$ be the standard coordinates on $\R^n$. We fix a direction $i\in\{1,\dots,n\}$. Let $z_i^{(1)},\dots, z_i^{(l)}$ be the intersection points between $\overline{G}$ and the face of $K$ defined by $x_i=0$ with multiplicity $m^{(1)},\dots, m^{(l)}$ respectively. By Proposition \ref{prop: area}, we have $m^{(1)}+\dots+m^{(l)} = d$. Let $e_{0i}^{(k)}$ be the edge of $G$ corresponding to the intersection point $z_i^{(k)}$. For each intersection point $z_i^{(k)}$, by Lemma \ref{lem: collection of paths}, we obtain a collection of $m^{(k)}$ paths starting from $e_{0i}^{(k)}$ with direction $i$. So for $k=1,\dots,l$, we obtain in total $d$ paths, and we label them as $P_{i1},\dots,P_{id}$. For each such path $P$, let $V(P)$ denote the set of vertices of $P$ that lies in $K^\circ$, and let $V_0(P)$ be the following subset of $V(P)$.

A vertex $Q$ belongs to $V_0(P)$ if and only if there is an edge of $G$, denoted by $e(Q)$, such that
\begin{enumerate}[(i)]\itemindent=5mm
\item The vertex $Q$ is an endpoint of the edge $e(Q)$.
\item The edge $e(Q)$ is not in contained in the path $P$.
\item There exists $j\in\{1,\dots,n\}$, $j\neq i$, such that the $j^\text{th}$ component of $w_{e(Q)}$ is non-zero.
\end{enumerate}

We claim that (see Lemma \ref{lem: bound on one path})
\begin{equation}\label{eq: bound on one path}
\# V_0(P)\leq 2d(n-1) .
\end{equation}
Now we vary $i$, and in the same way, we get $nd$ paths $P_{ik}$ for $i=1,\dots,n$, $k=1,\dots,d$. We claim that (see Lemma \ref{lem: cover by paths})
\begin{equation}\label{eq: cover by paths}
\bigcup_{i=1}^{n-1} \bigcup_{k=1}^d V_0(P_{ik}) \supset V(G) .
\end{equation}
Combining equations (\ref{eq: bound on one path}) and (\ref{eq: cover by paths}), we have proved our proposition.
\end{proof}

\begin{lemma}\label{lem: bound on one path}
For a path $P$ among the paths $P_{ik}$ constructed in the proof above, we have the following bound
\[\#V_0(P)\leq 2d(n-1) .\]
\end{lemma}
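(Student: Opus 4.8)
The plan is to bound $\#V_0(P)$ by estimating, for each admissible direction $j \neq i$, how many vertices $Q \in V_0(P)$ can have their associated edge $e(Q)$ carry a nonzero $j^\text{th}$ component in its weight. First I would fix the path $P = P_{ik}$ and a direction $j \neq i$, and consider the projection of the relevant edges onto the $j^\text{th}$ coordinate axis. The key idea is that a vertex $Q$ in $V_0(P)$ witnesses a branching of the path: at $Q$ the path continues along one edge, but there is another edge $e(Q)$ leaving $Q$ whose weight has a nonzero $j^\text{th}$ component. I would interpret each such $e(Q)$ as contributing to a ``flow'' in the $j^\text{th}$ direction that either enters or leaves the region swept out by $P$, and then appeal to the global balancing condition (Lemma \ref{lem: global balancing}) together with Proposition \ref{prop: d} to see that the total $j^\text{th}$-directional flux across $\partial K$ is controlled by $d$.

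The concrete mechanism I would use is the capacity bookkeeping from Lemma \ref{lem: collection of paths}: for the fixed direction $j$, the total capacity $\sum_e |w_e^j|$ available to flow across any hyperplane $x_j = \text{const}$ is exactly $d$ (this is Proposition \ref{prop: d} applied in direction $j$). Each vertex $Q \in V_0(P)$ forces a genuine unit of $j$-directional weight to branch off from $P$ at $Q$, and since the projection of $P$ to the $i^\text{th}$ coordinate is injective (Definition \ref{def: path}(\ref{def: path: injectivity})), the vertices of $P$ are linearly ordered along $P$. The plan is to show that as we traverse $P$ in increasing $i^\text{th}$ coordinate, each vertex in $V_0(P)$ corresponding to direction $j$ consumes capacity that, once spent, is bounded in total by $2d$ per direction: a factor of $2$ accounting for the fact that the branching edge $e(Q)$ may point either in the positive or negative $j^\text{th}$ direction (so we must bound incoming and outgoing flux separately, each by $d$). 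Summing over the $n-1$ directions $j \neq i$ then yields $\#V_0(P) \le 2d(n-1)$.

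The step I expect to be the main obstacle is making precise the claim that distinct vertices in $V_0(P)$ consume distinct units of capacity, so that no double-counting occurs. A single branching edge $e(Q)$ could in principle be shared as the witness for several vertices, or a vertex might qualify through several directions $j$ simultaneously; I would need to argue that it suffices to charge each $Q$ to a single well-chosen direction $j(Q)$ and a single sign, and that the injectivity of the $i^\text{th}$-coordinate projection prevents the same capacity quantum from being counted twice along $P$. The careful part is the monotonicity argument: because the path only moves forward in the $i^\text{th}$ coordinate, each edge of $H$ lying transverse to $P$ can serve as $e(Q)$ for at most one vertex $Q$ on $P$, and its contribution to the $j$-flux is a positive integer bounded above by the total capacity $d$ in that direction.

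I would organize the write-up by first reducing to a single direction $j$, then setting up the flux inequality $\#\{Q \in V_0(P) : w_{e(Q)}^j \neq 0,\ w_{e(Q)}^j > 0\} \le d$ via the capacity count, doing the same for the negative sign, and finally summing. The role of Lemma \ref{lem: collection of paths}'s property $|q| \le 1$ is to guarantee that the weights assigned along $P$ never exceed the underlying edge weights, so that the capacity accounting remains consistent with the true areas and no branching is undercounted.
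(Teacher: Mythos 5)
Your high-level skeleton coincides with the paper's: fix a direction $j\neq i$, split the branching edges $e(Q)$ according to the sign of their $j^{\text{th}}$ component, bound the total $j$-weight of each signed family by $d$, and sum over the $n-1$ directions; the concluding step, that each $Q\in V_0(P)$ contributes at least $1$ to $\sum_{j\neq i}\sum_{Q\in V_0(P)}|w^j_{e(Q)}|$, is also exactly how the paper finishes (and note that a vertex being counted in several directions is harmless, since the inequality goes the right way, so your worry about charging each $Q$ to a single direction $j(Q)$ is unnecessary). However, there is a genuine gap at the key step. You bound $\sum_{Q}|w^j_{e(Q)}|$ by the ``capacity across a hyperplane $x_j=\mathrm{const}$''. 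This does not work: the branching edges $e(Q)$ attach to $P$ at different points, may be arbitrarily short, and in general no single hyperplane $x_j=c$ meets all of them; the flux bound holds at each fixed level $c$, but different branching edges can each exhaust that bound at their own level, so the level-wise bounds do not sum to a bound on $S^-_{P,j}$. Your monotonicity remark (each transverse edge serves as $e(Q)$ for at most one $Q$) prevents one edge from being counted twice, but it does not prevent many distinct edges, at many distinct levels, from each carrying $j$-weight up to $d$. (Also, the flux through $x_j=c$ is not ``exactly $d$'' as you assert via Proposition \ref{prop: d} --- that proposition concerns the faces of $K$, and crossings of the diagonal face $x_1+\dots+x_n=1$ reduce the flux at intermediate levels; only the inequality $\le d$ holds, though that is all one needs.)

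The missing idea, which is how the paper closes precisely this gap, is to replace the flat hyperplane by a single piecewise-linear hypersurface adapted to $P$: since ${p_i}|_P$ is injective (Definition \ref{def: path}(\ref{def: path: injectivity})), $P$ is a graph over the $x_i$-axis, and one takes the region $R_{j-}=\big\{x_j\le p_j(q_i(x_i))-\epsilon\big\}$, the locus lying at least $\epsilon$ below, in the $j^{\text{th}}$ direction, the graph of $P$ extended by constants outside its $x_i$-range. For $\epsilon$ small enough, every edge of $E^-_{0,j}(P)$ crosses the boundary of this region, because it emanates from a vertex lying on $P$ and points downward in $x_j$; so all of them cross one common surface. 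Then Lemma \ref{lem: global balancing}, applied to the interior of $R_{j-}\cap K$ and combined with Proposition \ref{prop: d}, shows that the total $|w^j|$-flux through $\partial(R_{j-}\cap K)\setminus(\partial K\cap\{x_j=0\})$ equals $d$, which yields $S^-_{P,j}\le d$ in one stroke, and symmetrically $S^+_{P,j}\le d$. Incidentally, the property $|q|\le 1$ from Lemma \ref{lem: collection of paths} plays no role in this lemma --- only the weights of the full edges $e(Q)$ of $G$ and the monotonicity of $P$ in $x_i$ enter --- so your final paragraph is aimed at the wrong place.
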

\begin{proof}
Let $S_{P,j} =\sum_{Q\in V_0(P)} |w_{e(Q)}^j|$ for $j\in\{1,\dots,\widehat i,\dots,n\}:=\{1,\dots,n\}\setminus\{i\}$, where $e(Q)$ is the edge of $G$ associated to the vertex $Q$ as in the definition of $V_0(P)$ in the proof of Proposition \ref{prop: vertex bound}. Now we fix $j$, and let 
\[E^-_{0,j} (P) =\big\{e(Q)\, \big|\, Q\in V_0(P), \widetilde w_{e(Q)}^j <0\big\} ,\]
\[E^+_{0,j} (P) =\big\{e(Q)\, |\, Q\in V_0(P), \widetilde w_{e(Q)}^j >0\big\} ,\]
where $\widetilde w_{e(Q)}$ is the representative of $w_{e(Q)}$ that points outwards from $Q$. Let
\[S^-_{P,j} = \sum_{e\in E^-_{0,j}(P)} -\widetilde w_{e}^{j} ,\]
\[S^+_{P,j} = \sum_{e\in E^+_{0,j}(P)} \widetilde w_{e}^{j} .\]

Let $p_i\colon\R^n\rightarrow\R$ be the projection to the $i^\text{th}$ coordinate, and $p_j$ the projection to the $j^\text{th}$ coordinate. By Definition \ref{def: path}(\ref{def: path: injectivity}), ${p_i}_{|P}$ is injective. Assume that the image of ${p_i}_{|P}$ is the closed interval $[0,z_i^{P}]$. Let
\[q_i=
\begin{cases}
({p_i}_{|P})^{-1}(0) &\text{for }x_i\in (-\infty,0]\\
({p_i}_{|P})^{-1}(x_i) &\text{for }x_i\in [0,z_i^P]\\
({p_i}_{|P})^{-1}(z_i^P) &\text{for }x_i\in [z_i^P,\infty)
\end{cases}\]
\[R_{j-}=\big\{(x_1,\dots,x_n)\in\R^n \, \big| \, x_j\leq p_j(q_i(x_i))-\epsilon\big\} .\]
We choose $\epsilon$ to be a sufficiently small positive real number such that
\begin{enumerate}[(i)]\itemindent=5mm
\item $R^\circ_{j-}\supset\big\{(x_1,\dots,x_n)\in\R^n \, \big|\, x_j\leq 0\big\}$.
\item $\partial R_{j-}\cap V(\overline G)=\emptyset$.
\item $\partial R_{j-}$ intersects $\overline G$ transversely.
\item $\forall e\in E^-_{0,j} (P)$, $e\cap R_{j-}^\circ\neq\emptyset$.
\end{enumerate}
Let
\[T_{j-}=\partial (R_{j-}\cap K)\setminus (\partial K \cap\{x_j=0\}) .\]
Then for any $y\in T_{j-}\cap \overline{G}$, let $e(y)$ denote the edge of $G$ corresponding to the intersection point $y$. By Lemma \ref{lem: global balancing}, we have \[\sum_{y\in T_{j-}\cap \overline{G}} |w_{e(y)}^j|=d .\]
Therefore $S^-_{P,j}\leq d$, and similarly $S^+_{P,j}\leq d$, so $S_{P,j}=S_{P,j}^-+S_{P,j}^+\leq 2d$. Let $S_P=\sum_{1\leq j \leq n, j\neq i} S_{P,j}$. We have $S_P\leq 2d(n-1)$. By the definition of the set $V_0(P)$, each vertex $Q\in V_0(P)$ contribute at least 1 to the quantity $S_P$ so we obtain that $\#V_0(P)\leq 2d(n-1)$.
\end{proof}

\begin{lemma}\label{lem: cover by paths}
Let $P_{ik}, V_0$ be as in the proof of Proposition \ref{prop: vertex bound}, we have
\[\bigcup_{i=1}^{n-1} \bigcup_{k=1}^d V_0(P_{ik}) \supset V(G) .\]
\end{lemma}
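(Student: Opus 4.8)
The plan is to show that every vertex $Q\in V(G)$ lies in $V_0(P_{ik})$ for at least one direction $i\in\{1,\dots,n-1\}$ and one index $k$. The argument splits into three ingredients: locating a usable direction $i$, checking that the direction-$i$ paths actually pass through $Q$, and then exhibiting an off-path edge at $Q$ whose weight has a nonzero component transverse to $i$.

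First I would locate the direction. Since $G$ is a genuine one-dimensional polyhedral complex, no two edges incident to $Q$ can point along the same ray, so any line through $Q$ carries at most two edges. As $Q$ is at least $3$-valent (Definition \ref{def: tropical curve}), its incident edges cannot be collinear, hence they are not all parallel to $\e_n$; thus some incident edge has a nonzero component in a coordinate direction $i\in\{1,\dots,n-1\}$. This is exactly why restricting to $i\le n-1$ is enough. Fix such an $i$.

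Next I would verify that $Q$ lies on one of the paths $P_{i1},\dots,P_{id}$. Orienting every edge $e$ with $|w^i_e|\neq 0$ in the direction of increasing $x_i$, the balancing condition yields conservation of $i$-capacity at each vertex, while the saturated hypothesis together with the shape of the faces of $K$ forces every source of this flow to lie on the face $\{x_i=0\}$, where the total capacity equals $d$. A minimal-$x_i$ induction then shows that the greedy construction of Lemma \ref{lem: collection of paths} uses up the full capacity $|w^i_e|$ of every such edge. Consequently each edge incident to $Q$ with nonzero $i$-component is traversed by some path, so $Q$ lies on a direction-$i$ path. Finally I would produce the transverse off-path edge. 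Partition the edges at $Q$ by the sign of their outward $i$-component into $I_0$ (zero), $I_{\mathrm{in}}$ (negative), and $I_{\mathrm{out}}$ (positive); by balancing, both $I_{\mathrm{in}}$ and $I_{\mathrm{out}}$ are nonempty. If $I_0\neq\emptyset$, any edge of $I_0$ is automatically off every path and, being a nonzero vector with vanishing $i$-component, has a nonzero component in some direction $j\neq i$, so $Q\in V_0(P_{ik})$. If $I_0=\emptyset$, then $|I_{\mathrm{in}}|+|I_{\mathrm{out}}|\geq 3$, so one set, say $I_{\mathrm{out}}$, has at least two elements; since at most one edge at $Q$ can be the ray $+\e_i$, at least one out-edge $t$ is not parallel to $\e_i$. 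Because the flow is fully saturated and spread over at least two out-edges, some path through $Q$ leaves via an out-edge other than $t$, so $t$ is off that path and witnesses $Q\in V_0(P_{ik})$. The case $|I_{\mathrm{in}}|\geq 2$ is symmetric.

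The main obstacle I anticipate is the saturation statement in the middle step: one must be certain that the greedy path construction leaves no $i$-capacity unused, so that the ``avoiding'' path required in the last step genuinely exists. This is precisely where the balancing condition (conservation) and the saturated hypothesis (which pins the sources on $\{x_i=0\}$) work together; once saturation is established, the remaining case analysis is elementary.
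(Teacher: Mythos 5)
Your proof is correct, and it rests on the same two pillars as the paper's: the saturation property that every edge $e$ with $w_e^i\neq 0$ is contained in some direction-$i$ path (via Lemma \ref{lem: global balancing} and Lemma \ref{lem: collection of paths}), and the observation that $3$-valence forces some edge at each vertex to have a nonzero component in a direction $i\le n-1$, because distinct edges at a vertex cannot span the same ray. Where you genuinely differ is the endgame. The paper argues path-first: take the path $P_{ik}$ containing the chosen edge $e$; if $v\notin V_0(P_{ik})$, then every off-path edge at $v$ is parallel to $\e_i$, such an edge $e'$ lies on another path $P_{ik'}$, and $e$ is then claimed to witness $v\in V_0(P_{ik'})$. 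You argue witness-first: split the edges at $Q$ by the sign of the outward $i$-component, exhibit the transverse witness $t$ directly (an edge of $I_0$ if that set is nonempty, otherwise, by pigeonhole and the no-common-ray fact, a non-$\e_i$-parallel edge on the side with at least two edges), and only then use saturation of a \emph{different} edge on that side to produce a path through $Q$ avoiding $t$. The mathematical content is the same, but your organization is tighter exactly where the paper is terse: the paper's assertion that $e'\neq e$ forces $w_e^j\neq 0$ for some $j\neq i$ actually requires ruling out that $e$ and $e'$ are opposite rays parallel to $\e_i$ (this needs balancing together with the no-common-ray fact), and its conclusion $v\in V_0(P_{ik'})$ tacitly assumes $e\not\subset P_{ik'}$, which, when it fails, must be repaired using the other on-path edge at $v$. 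Your scheme sidesteps both issues, at the small cost of having to note explicitly (as you do implicitly via Definition \ref{def: path}(\ref{def: path: injectivity})) that a path containing the second out-edge $u$ necessarily passes through $Q$ and meets only two edges there, so that $t$ is indeed off that path; likewise, in the $I_0\neq\emptyset$ case the needed direction-$i$ path through $Q$ exists by your first two steps.
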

\begin{proof}
By Lemma \ref{lem: global balancing} and Lemma \ref{lem: collection of paths}, we see that for any edge $e\subset G$, any $i\in\{1,\dots,n\}$ such that $w_e^i\neq 0$, there exists $k\in\{1,\dots,d\}$ such that the path $P_{ik}$ constructed in the proof of Proposition \ref{prop: vertex bound} contains $e$. Now for any vertex $v$ of $G$, since $v$ is at least 3-valent by definition, there exists an edge $e$ of $G$ containing $v$ such that $w_e^i\neq 0$ for some $i\in\{1,\dots, n-1\}$. This means that there exists $k\in\{1,\dots,d\}$ such that the path $P_{ik}$ contains $e$ by what we have just said. However it can happen that $v\notin V_0(P_{ik})$. In such cases, by the definition of the set $V_0(P_{ik})$, there exists another edge $e'\not\subset P_{ik}$ such that $w_{e'}^j=0$ for any $j\in\{1,\dots,\widehat i,\dots, n\}$. Since $w^i_{e'}\neq 0$, there exists $k'\in\{1,\dots,d\}$ such that the path $P_{ik'}$ contains $e'$. Since $e'\neq e$, there exists $j\in\{1,\dots,\widehat i,\dots,n\}$ such that $w^j_e\neq 0$, which implies that $v\in V_0(P_{ik'})$. To sum up, we have proved that for any vertex $v$ of $G$, there exists $i\in\{1,\dots,n-1\}$, $k\in\{1,\dots,d\}$ such that $v\in V_0(P_{ik})$, so we have proved our lemma.
\end{proof}

\begin{remark}
By analogy with algebraic geometry, we can expect a much better bound on the number of vertices based on the Castelnuovo bound on the genus of a smooth curve of given degree in the projective space $\mathbf P^n$ (see for example \cite{Arbarello_Geometry_1985}). Indeed, once we know how to bound the genus of our tropical curve $G$, which is by definition $\rk \rH_1 (G)$, we can bound the number of vertices immediately. For example, using cellular homology to calculate the Euler characteristic of $G$, we have
\[1 - \rk \rH_1(G) = \# V(G) - \#\{\text{internal edges of $G$}\} .\]
Then it suffices to observe that the number of internal edges is bounded below by the hypothesis that each vertex is at least 3-valent.
\end{remark}

\begin{conjecture*}
The number of vertices of $G$ is bounded by $2 \pi(d,n) + (n+1)d -2$, where $\pi(d,n)$ is defined by
\[\pi(d,n)=\frac{m(m-1)}{2}(n-1)+m\epsilon ,\]
\[\text{ where }m=\left[\frac{d-1}{n-1}\right] \text{ and } \epsilon = d-1-m(n-1) .\]
When $d>2n$, the bound may be achieved by a tropical analogue of Castelnuovo curves.
\end{conjecture*}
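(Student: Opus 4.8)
The plan is to deduce the conjecture from a tropical analogue of Castelnuovo's genus bound, exactly along the lines indicated in the preceding remark. Set $g = \rk \rH_1(G)$ and let $L$ be the number of unbounded edges of the saturated curve $G$. Since $\overline G$ meets each of the $n+1$ facets of $\partial K$ in $d$ points counted with multiplicity (Proposition \ref{prop: d}), and every unbounded edge accounts for at least one unit of multiplicity on the facet it crosses, we get $L\le (n+1)d$. Summing valences, the 3-valence hypothesis gives $3\#V(G)\le 2\,\#\{\text{internal edges}\}+L$, while the Euler characteristic identity of the remark reads $\#\{\text{internal edges}\}=\#V(G)-c+g$ with $c\ge 1$ the number of connected components. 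Eliminating the edge count leaves $\#V(G)\le 2g+L-2c\le 2g+(n+1)d-2$, so the whole conjecture reduces to the single estimate $\rk \rH_1(G)\le\pi(d,n)$, understood (as in the classical statement) for connected nondegenerate $G$.

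The remaining task is thus a purely tropical Castelnuovo inequality, and I would try to transcribe the classical proof. A coordinate facet section $\Gamma=\overline G\cap\{x_i=0\}$ is a configuration of $d$ points (with multiplicity) in a copy of $\R^{n-1}$, playing the role of a hyperplane section in tropical $\mathbb P^{n-1}$. To each integer $k\ge 1$ one attaches the number $h(k)$ of independent conditions that $\Gamma$ imposes on tropical (piecewise-linear) hypersurfaces of degree $k$, and the aim is a tropical Hilbert-function bound of the shape $g\le\sum_{k\ge 1}\bigl(d-h(k)\bigr)$. The arithmetic then closes up automatically: if one can show the uniform position estimate $h(k)\ge\min\bigl(d,\,k(n-1)+1\bigr)$, the summand is at most $\max\bigl(0,\,d-1-k(n-1)\bigr)$, and summing over $k$ reproduces precisely $\pi(d,n)=\binom{m}{2}(n-1)+m\epsilon$.

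The main obstacle is the \emph{tropical uniform position lemma} $h(k)\ge\min\bigl(d,\,k(n-1)+1\bigr)$, which says that no more than $k(n-1)+1$ of the $d$ section points can lie on a single degree-$k$ tropical hypersurface. Classically this spreading is forced by irreducibility through the monodromy of the uniform position principle, and there is no ready tropical substitute for that monodromy. One would instead have to read the required genericity off the balancing condition itself, controlling — much as in Lemma \ref{lem: bound on one path} — how the strands of the path collection are kept from collapsing onto a common tropical hypersurface; a secondary difficulty is to pin down the correct tropical genus–Hilbert function relation, for which the time-slice picture of Section \ref{sec: path} (cycles of $G$ being created and destroyed as $x_i$ increases) is the natural source of intuition. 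Converting these heuristics into theorems is exactly what the conjecture is missing, and the predicted extremal tropical Castelnuovo curves for $d>2n$ are the test cases against which any candidate argument must be checked.
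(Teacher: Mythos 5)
This statement is a \emph{conjecture} in the paper: the author offers no proof of it, so there is nothing to compare your argument against on the paper's side. Your first paragraph is in fact a faithful (and arithmetically correct) expansion of the remark that immediately precedes the conjecture: with $c\geq 1$ components, $3\#V(G)\leq 2\,\#\{\text{internal edges}\}+L$, $\#\{\text{internal edges}\}=\#V(G)-c+g$, and $L\leq (n+1)d$ (each unbounded edge of a saturated curve crosses exactly one facet, and Proposition \ref{prop: d} caps the total multiplicity per facet at $d$) do combine to give $\#V(G)\leq 2g+(n+1)d-2$; and the sum $\sum_{k=1}^{m}\bigl(d-1-k(n-1)\bigr)$ does equal $\pi(d,n)$. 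So you have correctly reduced the conjecture to the single inequality $\rk\rH_1(G)\leq\pi(d,n)$, which is exactly the reduction the paper's remark already sketches.

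The genuine gap is that this inequality — the tropical Castelnuovo bound — is the entire content of the conjecture, and your proposal leaves it unproven, as you yourself acknowledge. The two ingredients you would need are precisely the ones with no known tropical substitute. First, the uniform position lemma: classically it is forced by monodromy on the hyperplane sections of an irreducible curve, and there is no monodromy group acting on the facet section $\Gamma=\overline G\cap\{x_i=0\}$ of a tropical curve; balancing alone does not prevent many section points from lying on a low-degree tropical hypersurface. Second, the genus–Hilbert function relation: tropical points do not impose conditions linearly on tropical polynomials (tropical linear systems live in a semimodule, and their dimension theory does not mirror the classical one), so even defining $h(k)$ so that $g\leq\sum_{k\geq 1}(d-h(k))$ holds is an open problem, not a transcription exercise. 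Worse, your route may prove too much: embedded tropical curves need not be realizable, and tropicalization can create cycles of the image graph, so it is not even clear that the intermediate statement $\rk\rH_1(G)\leq\pi(d,n)$ is true in the generality needed — it could be strictly stronger than the vertex bound conjectured (for non-trivalent curves the two are not equivalent). In short, your proposal is a correct reduction plus an honest research program, not a proof; the conjecture remains exactly as open after your argument as before it.
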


\section{Bound on the weights by the area}\label{sec: bound on the weights by area}

In this section, we show that the weights of the edges of a tropical curve can be bounded by the area.

\begin{proposition}\label{prop: flow}
Fix $i\in\{1,\dots,n\}$. Let $R$ be the convex hull of the $2^n$ points
\[\Big\{(\epsilon_1,\dots,\epsilon_n)\in\R^n\,\Big|\, \epsilon_j\in\{-1,+1\}\text{ for }j\in\{1,\dots,\widehat i,\dots,n\}, \ \epsilon_i\in\{0,1\}\Big\} .\]
Let $V$ be an open set in $\R^n$ containing $R$. Let $H$ be a tropical curve in $V$ such that there is an edge $e_0$ of $H$ whose interior contains the point $0=(0,\dots,0)\in\R^n$. Then we have
\[\Area (H_{|R^\circ}) \geq |w_{e_0}^i| ,\]
where $w_{e_0}^i$ denotes the $i^\text{th}$ component of the weight of the edge $e_0$.
\end{proposition}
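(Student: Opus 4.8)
The plan is to realize $|w_{e_0}^i|$ as a lower bound for the area by decomposing the flow issuing from $e_0$ into unit paths and estimating the area path by path. First, if $w_{e_0}^i=0$ the inequality is trivial since areas are nonnegative, so I assume $w_{e_0}^i\neq 0$. In coordinates adapted to the direction $i$, the polyhedron $R$ is the box with $x_i\in[0,1]$ and $x_j\in[-1,1]$ for $j\neq i$; its bottom face $F=R\cap\{x_i=0\}$ contains the origin in its relative interior, and $e_0$ crosses $F$ transversely there precisely because $w_{e_0}^i\neq 0$. Thus the path machinery of Section \ref{sec: path} applies with $c=0$: by Lemma \ref{lem: collection of paths} there is a collection of $m:=|w_{e_0}^i|$ paths $P_1,\dots,P_m$ starting from $e_0$ with direction $i$, whose union $U$ has the property that every edge $e$ of $H$ is traversed by at most $|w_e^i|$ of the paths. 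Indeed, on a segment of $U$ the weight is $w'_s=k\cdot w_e/|w_e^i|$, so the parameter $q=k/|w_e^i|$ of Lemma \ref{lem: collection of paths} satisfies $|q|\le 1$ exactly when the number $k_e$ of paths through $e$ obeys $k_e\le|w_e^i|$.

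Next I would bound the area of each path from below. For a path $P$ set $A(P)=\sum_{s}|s|\cdot|w'_s|$, the sum over its segments with $w'_s=w_e/|w_e^i|$ the path weight of Definition \ref{def: path}. Since $|w'_s|=|w_e|/|w_e^i|\ge 1$ because $|w_e|\ge|w_e^i|$, we get $A(P)\ge\sum_s|s|$, the total Euclidean length of $P$. By the construction in Lemma \ref{lem: collection of paths} the $i$-th coordinate increases strictly along $P$ (compatibly with the injectivity of Definition \ref{def: path}(\ref{def: path: injectivity})), so $P$ runs from the origin to an exit point on $\partial R$ with $x_i>0$, hence not on the bottom face. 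Every such point lies on $\{x_i=1\}$ or on some $\{x_j=\pm 1\}$ and therefore has Euclidean norm at least $1$, so $\mathrm{length}(P)\ge 1$ and consequently $A(P)\ge 1$ for each of the $m$ paths.

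Finally I would compare the total path area with the area of $H_{|R^\circ}$. Regrouping the sum $\sum_{j=1}^m A(P_j)$ by the edges of $H$ gives $\sum_e k_e\,|e\cap R|\,|w_e|/|w_e^i|$, and applying $k_e\le|w_e^i|$ term by term bounds this above by $\sum_e|e\cap R|\,|w_e|$, which is at most $\Area(H_{|R^\circ})$ since these edges form a subcomplex of $H$ and $|e\cap R|=|e\cap R^\circ|$. Combining the two estimates,
\[\Area(H_{|R^\circ})\ \ge\ \sum_{j=1}^m A(P_j)\ \ge\ m\ =\ |w_{e_0}^i|,\]
as desired.

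The main obstacle is the bookkeeping linking the path decomposition to the genuine area: one must check that Lemma \ref{lem: collection of paths} really delivers the capacity bound $k_e\le|w_e^i|$ in this geometry and that the segment $s_0=e_0\cap R$, shared by all $m$ paths, is counted with the correct multiplicity $k_{e_0}=m$, so that the regrouping step introduces no double counting. The geometric ingredient that makes the per-path bound clean — that $R$ is arranged so every non-bottom boundary point has norm at least $1$ — is exactly why $R$ is defined with the coordinates $\pm 1$ and $0,1$.
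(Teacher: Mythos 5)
Your proof is correct and is essentially the paper's own argument: both invoke Lemma \ref{lem: collection of paths} to produce $m=|w_{e_0}^i|$ paths starting from $e_0$ in direction $i$, bound each path's area below by $1$ via the exit point on $\partial R$ having $x_i>0$ (hence norm at least $1$) together with $|w'_s|\ge 1$, and sum over the paths using the capacity bound $k_e\le|w_e^i|$ to avoid over-counting. The only difference is that you spell out the regrouping-by-edges step and the inequality $|w_e|\ge|w_e^i|$ explicitly, which the paper compresses into ``by summing up contributions from all $P_k$.''
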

\begin{proof}
Denote $m=|w_{e_0}^i|$. By Lemma \ref{lem: collection of paths}, we obtain a collection of $m$ paths $P_1,\dots,P_m$ starting from $e_0$ with direction $i$. Each path $P_k$ connects the origin $O$ with a point on the boundary $\partial R$, denoted by $z_k$. By Definition \ref{def: path} (\ref{def: path: injectivity}), the $i^\text{th}$ coordinate of $z_k$ is strictly positive. This implies in particular that the length of $P_k$ under the Euclidean metric is at least one, so we have $\Area (P_k)\geq 1$. By summing up contributions from all $P_k$, for $k=1,\dots,m$, we obtain that $\Area (H_{|R^\circ}) \geq m$.
\end{proof}

\begin{corollary}\label{cor: bound on intersection nb}
Let $A,U,K,G,\delta$ be as in Theorem \ref{thm: main1}, and denote by $I$ the number of intersection points between $G$ and $\partial K$ (with no multiplicity concerned). Then $I\leq A/\delta$.
\end{corollary}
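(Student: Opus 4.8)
The plan is to charge to each intersection point of $G$ with $\partial K$ a disjoint piece of $G$ of area at least $\delta$, and then to compare the total of these contributions with the global bound $A$. The essential tool is Proposition \ref{prop: flow}, which I will apply after a similarity that rescales by the factor $\delta$.

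First I would fix an intersection point $p\in\overline G\cap\partial K$ and let $e$ be the edge of $G$ passing through $p$. Since the weight $w_e$ is represented by a vector in $\Z^n\setminus 0$, there is a coordinate $i$ with $w_e^i\neq 0$, hence $|w_e^i|\ge 1$. Consider the similarity $\phi\colon x\mapsto (x-p)/\delta$ of $\R^n$; it preserves edge directions, weights and the balancing condition, so it carries $G$ to a tropical curve $G'$ for which the image of $e$ is an edge $e_0$ whose interior contains the origin. Taking $R$ to be the polyhedron of Proposition \ref{prop: flow} for the direction $i$, let $R_p=\phi^{-1}(R)$, a copy of $R$ scaled by $\delta$ and centered at $p$, with extent $\delta$ in the $i$-direction and $\pm\delta$ in the remaining directions. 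Proposition \ref{prop: flow} gives $\Area(G'_{|R^\circ})\ge|w_e^i|$, and since $\phi^{-1}$ multiplies all lengths by $\delta$ and leaves weights unchanged, this transforms into
\[\Area\big(G_{|R_p^\circ}\big)\ \ge\ \delta\,|w_e^i|\ \ge\ \delta.\]

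Next I would sum over all $I$ intersection points. If the polyhedra $R_p$ can be chosen pairwise disjoint and all contained in $U$, then the restrictions $G_{|R_p^\circ}$ are disjoint subsets of $G$, so their areas add and
\[A\ \ge\ \Area(G)\ \ge\ \sum_{p} \Area\big(G_{|R_p^\circ}\big)\ \ge\ \delta\,I,\]
which yields $I\le A/\delta$ as claimed.

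The hard part will be arranging the $R_p$ to lie in $U$ and to be pairwise disjoint, and this is exactly where the precise shape of $U=U'_\delta$ is used. For a crossing on a facet $\{x_i=0\}$ one relies on the fact that $\partial U$ lies a distance $\delta$ beyond this facet, so the scaled region can be pushed into the collar $U\setminus K$ (or instead inward into $K$) while remaining inside $U$. I expect the delicate cases to be crossings lying close to the codimension-$\ge 2$ skeleton $(\partial K)^{n-2}$, where the boxes of two different facets could meet, and crossings on the slanted facet $\{x_1+\dots+x_n=1\}$, where the normal is not a coordinate direction. In both situations I would exploit the two available freedoms: the choice of the coordinate $i$ among all indices with $w_e^i\neq 0$, and the choice of pushing $R_p$ inward versus outward. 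Using these, one reduces every crossing to the clean estimate above on some coordinate slab, while keeping the chosen regions separated; this bookkeeping is the only genuinely technical point, the area input being entirely supplied by Proposition \ref{prop: flow}.
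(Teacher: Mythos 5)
Your core estimate is exactly the paper's proof: the paper's entire argument for this corollary is the single sentence that, by Proposition \ref{prop: flow}, each intersection point contributes at least $\delta$ to the total area of $G$ --- which is precisely your rescaled application of Proposition \ref{prop: flow} at each crossing point, and your rescaling computation $\Area(G_{|R_p^\circ})\ge\delta\,|w_e^i|\ge\delta$ is correct (granting $R_p\subset U$).

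The gap is in the summation step, and it is genuine: the condition you defer --- that the polyhedra $R_p$ can be chosen pairwise disjoint --- is not delicate bookkeeping but is unachievable in general. Nothing prevents crossing points from clustering: take $r$ disjoint parallel straight segments of weight $(1,0,\dots,0)$ spanning $U$ and crossing the facet $\{x_1=0\}$ at points within distance $\delta/10$ of one another (each such segment is a vertex-free tropical curve of area about $1$, so $r$ can be as large as roughly $A$, while the corollary only forces $r\lesssim A/\delta$). For these edges the only admissible direction is $i=1$, so each $R_p$ is a $\delta$-box based at $p$ pointing either inward or outward; as soon as $r\ge 3$, two of the boxes point to the same side and overlap. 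Hence the inequality $A\ge\sum_p\Area(G_{|R_p^\circ})$ is never established in exactly the situation that matters, since areas of restrictions to overlapping regions double count edges of $G$. The correct mechanism --- implicit in the paper's machinery, though its one-sentence proof is equally silent about it --- is to replace disjointness of \emph{regions} by disjointness of \emph{capacity}: run the construction of Lemma \ref{lem: collection of paths} simultaneously for all crossing points of a given facet, decrementing the capacities $c_i(e)=|w_e^i|$ globally, so that each edge $e$ is traversed by at most $|w_e^i|$ paths in total. Then the union of all these paths, weighted by their multiplicities, has area at most $\Area(G)$ even though the paths overlap spatially, while each individual path still has Euclidean length at least (roughly) $\delta$ before it can exit the collar between $\partial K$ and $\partial U$; this simultaneous use of capacities is exactly how the proof of Proposition \ref{prop: vertex bound} handles its $d$ paths at once. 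One must still prevent double counting between the path systems of different facets --- your worry about the codimension-two skeleton resurfaces there --- and this costs at most a dimensional constant, which is harmless for the way the corollary is used later in the paper.
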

\begin{proof}
By Proposition \ref{prop: flow}, each intersection point contributes at least $\delta$ to the total area of $G$, whence the corollary.
\end{proof}

\begin{corollary}\label{cor: bound on weight}
Let $A,U,K,G,\delta$ be as in Theorem \ref{thm: main1}. For any edge $e$ of $G_{|K^\circ}$, any $i\in\{1,\dots,n\}$, we have $| w_e^i | \leq A/\delta$.
\end{corollary}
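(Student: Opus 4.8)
The plan is to reduce the bound on a single weight component to the area estimate already packaged in Proposition \ref{prop: flow}. The key observation is that Corollary \ref{cor: bound on weight} is essentially a local version of the flow proposition: Proposition \ref{prop: flow} says that if an edge $e_0$ passes through the origin with $i^\text{th}$ weight component of absolute value $m$, then the area of the curve inside the standard box $R$ is at least $m$. So first I would, for a given edge $e$ of $G_{|K^\circ}$ and a given direction $i$, pick an interior point $p$ of $e$ and try to position a suitably scaled and translated copy of the box $R$ so that $p$ plays the role of the origin and $e$ plays the role of $e_0$.

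The hard part will be matching the geometry of the box $R$ to the ambient compact set $K=K'$ and the open set $U=U'_\delta$: I need the translated-and-rescaled box to sit inside $U$ so that $\Area(H_{|R^\circ})$ is controlled by the total area $A$, while being large enough that the edge $e_0=e$ genuinely threads through it in the $i^\text{th}$ direction. The point of the $\delta$-fattening in the definition of $U'_\delta$ is precisely to guarantee this room: since $e$ meets $K^\circ$ and $K$ is the unit simplex while $U$ extends a definite distance $\delta$ beyond $K$ in every coordinate direction, there is a fixed amount of space around any point of $K$ inside which a copy of $R$ fits. Concretely, I would choose the rescaling factor of $R$ to be of order $\delta$, so the rescaled box lies inside $U$; then the rescaled version of Proposition \ref{prop: flow} yields $\Area(G_{|R^\circ}) \geq \delta\cdot|w_e^i|$ (the extra factor $\delta$ coming from the scaling of lengths, which multiplies the area since the weight norms are scale-invariant).

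Next I would bound $\Area(G_{|R^\circ})$ above: since the rescaled box $R$ lies inside $U$ and $G$ has total area at most $A$, we certainly have $\Area(G_{|R^\circ}) \leq A$. Combining the two inequalities gives $\delta\cdot|w_e^i| \leq A$, i.e. $|w_e^i| \leq A/\delta$, which is exactly the claim. The logical skeleton thus runs: localize at a point of the edge $e$, invoke a scaled form of Proposition \ref{prop: flow} to turn the weight component into a lower bound on local area, and bound that local area by the global area $A$.

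One subtlety I expect to address carefully is that Proposition \ref{prop: flow} is stated for an edge whose interior contains the origin, and its box $R$ is fixed; to apply it here I should record explicitly that the proposition is invariant under similarity transformations of $\R^n$ (translation and scaling), since both the balancing condition and the area behave well under such transformations — area scales linearly with the length rescaling while the weight norms are unchanged. Once this scale-invariance is noted, the placement of the box reduces to the elementary geometric fact that around any interior point of an edge meeting $K^\circ$ there is a box of side comparable to $\delta$ contained in $U'_\delta$, which follows directly from the coordinates used to define $U'_\delta$. I expect the bookkeeping of this placement — verifying the scaled box sits inside $U$ in every coordinate direction — to be the only genuinely fiddly step, and everything else to follow formally from the two displayed inequalities.
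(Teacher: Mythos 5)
Your proposal is correct and is essentially the paper's own argument: the paper's one-line proof invokes Proposition \ref{prop: flow} in exactly this translated-and-rescaled form, asserting that the weight $w_e$ contributes at least $|w_e^i|\cdot\delta$ to the total area of $G$, whence $|w_e^i|\leq A/\delta$. Your write-up simply makes explicit the similarity-invariance of Proposition \ref{prop: flow} (lengths scale, weights do not) and the placement of the $\delta$-scaled box inside $U'_\delta$, details the paper leaves implicit.
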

\begin{proof}
By Proposition \ref{prop: flow}, for any edge $e$ of $G_{|K^\circ}$, any $i\in\{1,\dots,n\}$, the weight $w_e$ contributes at least $|w^i_e|\cdot\delta$ to the total area of $G$, whence the corollary.
\end{proof}

\section{The saturation trick}\label{sec: saturation trick}

Finally we perform a trick to reduce the general case to the saturated case considered in Sections \ref{sec: interpretation of area} and \ref{sec: tropical vertex bound}.
Using the notations and the assumptions as in Theorem \ref{thm: main1}, our aim is to construct from $G$ a saturated tropical curve $G'$ in $K^\circ$ (in the sense of Definition \ref{def: saturated}).

Let $\epsilon$ be a positive real number and put
\[\check K=\big\{x\in K^\circ\,\big |\, \dist(x,\partial K)> \epsilon\big\} .\]
We choose $\epsilon$ small enough such that $V(G)\cap (K^\circ \setminus \check K) = \emptyset$.

Let $\e'_1,\dots, \e'_n$ denote the vectors in $\R^n$ with coordinates $(-1,0,\dots,0)$, $\dots$, $(0,\dots,0,-1)$ respectively.
Let $\e'_0=-(\e'_1+\dots+\e'_n)=(1,1,\dots,1)$.
The following lemma is obvious.

\begin{lemma}\label{lem:coordinates}
For any $w\in\Z^n$, there exists a unique collection of non-negative integers $a_0,\dots,a_n$ such that
\[w=\sum_{i=0}^n a_i \e'_i ,\]
and that $a_i$ is zero for at least one $i\in\{0,\dots,n\}$.
\end{lemma}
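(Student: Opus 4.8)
The plan is to solve the defining equation coordinate by coordinate, which collapses the whole problem to choosing a single integer $a_0$, and then to read off both existence and uniqueness from an elementary extremal condition.

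First I would expand the right-hand side in coordinates. Since $\e'_0=(1,\dots,1)$ and $\e'_j=-\e_j$ for $j\in\{1,\dots,n\}$ (where $\e_j$ is the $j^{\text{th}}$ standard basis vector), the $j^{\text{th}}$ coordinate of $\sum_{i=0}^n a_i\e'_i$ equals $a_0-a_j$. Writing $w=(w_1,\dots,w_n)$, the equation $w=\sum_{i=0}^n a_i\e'_i$ is therefore equivalent to the system
\[ a_j=a_0-w_j\quad\text{for }j\in\{1,\dots,n\}. \]
Thus any solution is completely determined by the single value $a_0$, and conversely every integer choice of $a_0$ yields integers $a_1,\dots,a_n$ solving the linear equation. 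The problem is now entirely about selecting $a_0$.

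Next I would translate the two remaining constraints into conditions on $a_0$. The requirement that all of $a_0,a_1,\dots,a_n$ be non-negative reads $a_0\ge 0$ together with $a_0\ge w_j$ for every $j$, that is,
\[ a_0\ge\max(0,w_1,\dots,w_n)=:M, \]
and $M$ is a non-negative integer as a maximum of integers. For the condition that some $a_i$ vanish: if $a_0>M$ then $a_0>0$ and $a_0>w_j$ for all $j$, so $a_0>0$ and every $a_j=a_0-w_j>0$, whence no coordinate is zero; while at $a_0=M$ either $M=0$ (so $a_0=0$) or $M=w_j$ for some $j$ (so $a_j=0$), and in either case at least one $a_i$ vanishes.

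This delivers both halves of the statement simultaneously. Setting $a_0=M$ and $a_j=M-w_j$ produces non-negative integers satisfying the equation with at least one zero, proving existence; and the analysis above shows that $a_0=M$ is the only admissible value, while $a_0$ determines the entire tuple, proving uniqueness. I do not expect a genuine obstacle here — as the paper itself remarks, the lemma is obvious — the only point deserving a little care is matching the ``at least one $a_i=0$'' condition precisely to the extremal value $a_0=M$, which is why I keep the case distinction $M=0$ versus $M=w_j$ explicit.
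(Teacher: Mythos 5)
Your proof is correct. Note that the paper itself offers no proof at all: it simply declares the lemma obvious and moves on, so there is no argument to compare yours against. Your reduction is exactly the natural verification one would write out — since the $j^{\text{th}}$ coordinate of $\sum_{i=0}^n a_i\e'_i$ is $a_0-a_j$, the whole tuple is determined by $a_0$, the non-negativity constraints force $a_0\geq\max(0,w_1,\dots,w_n)$, and the vanishing condition pins down $a_0$ to equal that maximum. Both existence and uniqueness drop out, and your explicit case split ($M=0$ versus $M=w_j$ for some $j$) correctly handles the only point where care is needed.
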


Initially we set $G'=G_{|K^\circ}$. Then for each edge $e$ of $G'$ such that the closure $\overline e$ intersects $\partial K$ non-perpendicularly, or $\overline e\cap(\partial K)^{n-2}\neq\emptyset$, we do the following modification to $G'$. Let $w_e$ be the weight of $e$ and choose the representative $\widetilde w_e$ that points from $\check K$ to $K^\circ\setminus\check K$. Now put $\widetilde w_e$ into Lemma \ref{lem:coordinates} and we get $(n+1)$ non-negative integers $a_0,\dots,a_n$. Let $P=e\cap \partial \check K$, $\widehat e=(K^\circ\setminus \overline{\check K})\cap e$. We first delete $\widehat e$ from $G'$. Now P becomes an unbalanced vertex. Then we add to $G'$ the rays starting from $P$ with direction $\e'_i$ and multiplicity $a_i$ for all $i\in\{0,\dots,n\}$. This makes the vertex $P$ balanced again and we finish our modification concerning the edge $e$ (see Figure \ref{fig:saturation_trick}).

\begin{figure}[H]
\centering
\begin{tikzpicture}[line cap=round, line join=round, >=triangle 45, x=1.0cm, y=1.0cm,scale=0.6]
\clip(-1,-0.1) rectangle (13,4.1);
\draw (0,4)-- (0,0);
\draw (0,0)-- (4,0);
\draw (4,0)-- (0,4);
\draw (8,0)-- (12,0);
\draw (12,0)-- (8,4);
\draw (8,4)-- (8,0);
\draw (9.66510998574419,0.6674450071279052)-- (10.6,0.2);
\draw (1.6,0.7)-- (3.0,0);
\draw [dash pattern=on 2pt off 2pt] (1.0,1.0)-- (1.6,0.7);
\draw [dash pattern=on 2pt off 2pt] (9.0,1.0)-- (9.66510998574419,0.6674450071279052);
\draw (10.6,0.2)-- (10.6,0);
\draw (10.6,0.2)-- (11.2,0.8);
\draw (2.2,0.85) node[anchor=north west] {$e$};
\draw [->,line width=0.6pt] (4.5,2.0) -- (6.5,2.0);
\end{tikzpicture}
\caption{}\label{fig:saturation_trick}
\end{figure}

\begin{lemma} \label{lem:saturation_trick}
Using the notations in Theorem \ref{thm: main1}. By construction we have the following.
\begin{enumerate}[(i)]\itemindent=5mm
\item $G'$ is a saturated tropical curve in $K^\circ$.
\item $\# V(G_{|K^\circ}) \leq \# V(G')$.
\item $\Area(G') \leq \Area(G_{|K^\circ})+n(A/\delta)^2 \leq A+n(A/\delta)^2$.
\end{enumerate}
\end{lemma}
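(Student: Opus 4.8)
The plan is to verify the three assertions directly from the explicit construction that precedes the lemma. Write $M = A/\delta$. Two inputs from Section~\ref{sec: bound on the weights by area} will be used repeatedly: every edge of $G_{|K^\circ}$ satisfies $|w_e^i|\le M$ (Corollary~\ref{cor: bound on weight}), and $G$ meets $\partial K$ in at most $M$ points (Corollary~\ref{cor: bound on intersection nb}). The structural fact driving everything is that $\e'_0,\dots,\e'_n$ are exactly the outward normal directions to the $n+1$ facets of $K$: the facet $\{x_j=0\}$ has normal $\e'_j=-\e_j$ and the facet $\{x_1+\dots+x_n=1\}$ has normal $\e'_0$.

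For (i) I would first check the tropical-curve axioms. Finiteness and closedness in $K^\circ$ are clear, and the added rays have integral weights since the $a_i$ are integers and $\e'_i\in\Z^n$. Balancing at each new vertex $P$ holds by design: after deleting $\widehat e$ the inner half of $e$ contributes $-\widetilde w_e$ at $P$, while the added rays contribute $\sum_i a_i\e'_i=\widetilde w_e$, so the two cancel; every other vertex is untouched because all modifications take place in $K^\circ\setminus\overline{\check K}$ whereas $V(G)\subset\check K$. The delicate axiom is $3$-valency at $P$, and here I would use the claim that a \emph{perpendicular} edge through $K^\circ$ never meets $(\partial K)^{n-2}$: if $e$ is parallel to some $\e'_k$ then all but one coordinate is constant along $e$ and equal to the strictly positive, sub-unit-sum coordinates of an interior point, so the boundary hit of $e$ lies in the relative interior of a single facet. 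Consequently the selection rule ``non-perpendicular or meeting the skeleton'' reduces to ``non-perpendicular'', which forces at least two of the $a_i$ to be positive, so $P$ carries the inner half-edge plus at least two rays. Saturation is then immediate: each added ray runs in a facet-normal direction and, since $P$ has positive coordinates of sum less than one, it lands in the relative interior of that facet and meets it perpendicularly, while the unmodified boundary edges are already perpendicular and skeleton-avoiding by the selection rule. (At an accidental crossing of a new ray with an existing edge one subdivides; such a point is automatically balanced and $4$-valent, so it does no harm.)

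Assertion (ii) is immediate: all vertices of $G_{|K^\circ}$ lie in $\check K$, which the construction leaves intact, so each survives in $G'$ with unchanged valency while the points $P$ are only added. For (iii) I would write $\Area(G')=\Area(G_{|K^\circ})-\sum\Area(\widehat e)+\sum_{\text{modified }e}\sum_i\Area(\mathrm{ray}_{e,i})$ and discard the nonnegative deleted term. The key computation is that a ray in direction $\e'_i$ from $P=(P_1,\dots,P_n)$ has area $a_iP_i$ for $i\ge1$ and $a_0\bigl(1-\sum_jP_j\bigr)$ for $i=0$; summing and substituting $a_j=a_0-w_e^j$ collapses the per-edge added area to $a_0-\sum_j w_e^jP_j$. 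Using $a_0\le M$, $|w_e^j|\le M$ and $\sum_jP_j\le1$ bounds each per-edge contribution by $2M$, and multiplying by the number of modified edges (at most $M$) gives total added area $\le 2M^2\le nM^2=n(A/\delta)^2$ for $n\ge2$; finally $\Area(G_{|K^\circ})\le\Area(G)\le A$ gives the second inequality.

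The main obstacle I anticipate is the structural half of (i): isolating exactly when $3$-valency could fail and ruling it out, i.e.\ proving that perpendicular edges avoid the skeleton so that every genuine modification produces at least two rays. The estimate in (iii) is then routine, the one subtlety being to exploit $\sum_jP_j\le1$ rather than bounding each ray by the diameter of $K$, which would be too lossy to fit inside $n(A/\delta)^2$.
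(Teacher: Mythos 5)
Your proof is correct and follows the paper's overall plan---direct verification of (i) and (ii) from the construction, plus Corollaries \ref{cor: bound on intersection nb} and \ref{cor: bound on weight} to control the added area in (iii)---but it is genuinely sharper in two places. First, where the paper disposes of (i) with ``follows directly from the construction,'' you actually prove it; in particular your observation that a perpendicular edge through an interior point must exit through the relative interior of a single facet (so the selection rule collapses to non-perpendicularity, every modified edge has at least two positive coefficients $a_i$, and the new vertex $P$ is at least $3$-valent) is exactly the point that needs checking and is absent from the paper. Second, for (iii) the paper estimates ray by ray, asserting that each added ray has area less than $A/\delta$ by Corollary \ref{cor: bound on weight}; as stated this intermediate claim is loose, since a coefficient $a_i=a_0-w_e^i$ can be as large as $2A/\delta$, and one can arrange a single ray of area exceeding $A/\delta$ (for instance an edge of weight $(M,-M)$, $M=A/\delta$, running parallel to the hypotenuse at distance $\epsilon$ and exiting near a corner); the paper's final bound survives only because the factor $n$ absorbs the slack. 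Your per-edge computation---collapsing the added area to $a_0-\sum_j w_e^jP_j\le 2A/\delta$ via $\sum_j P_j\le 1$, then multiplying by the number of modified edges---avoids this issue and is the cleaner route; note that at $n=2$ your inequality $2M^2\le nM^2$ is an equality, so no further slack was available. The one blemish you share with the paper is that Corollary \ref{cor: bound on intersection nb} counts intersection \emph{points} rather than modified \emph{edges}; the identification is harmless here (each edge--boundary incidence costs area at least $\delta$ by the argument of Proposition \ref{prop: flow}), but it deserves a word in both your write-up and the original.
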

\begin{proof}
(i) follows directly from the construction. (ii) is obvious since our modification may add new vertices to $G_{|K^\circ}$ but never decreases the number of vertices.
For (iii), each time we do a modification to an edge, we add at most $n$ rays, each of which has area less than $A/\delta$ (Corollary \ref{cor: bound on weight}). Moreover by Corollary \ref{cor: bound on intersection nb}, there are at most $A/\delta$ edges of $G$ intersecting with $\partial K$, so the total area of all the rays we added to $G_{|K^\circ}$ is bounded by $n (A/\delta)^2$.
\end{proof}

To conclude, combining Lemma \ref{lem:saturation_trick} with Proposition \ref{prop: vertex bound}, we have proved Theorem \ref{thm: main1}, with $2 (n-1)^2 (A+n (A/\delta)^2)^2$ being the bound on the number of vertices.
We have also proved Theorem \ref{thm: main} using the reduction explained at the end of Section \ref{sec:intro}.

\section{An example of a tropical curve with finite area but infinite number of vertices} \label{sec:example}

To better illustrate Theorem \ref{thm: main}, we give an example of a tropical curve\footnote{Here we drop the finiteness assumption in Definition \ref{def: tropical curve} of tropical curves.} $G$ in an open subset $U\subset \R^2$ which has finite area $A$ but infinite number of vertices.
It does not contradict Theorem \ref{thm: main} because the number of vertices of $G$ inside any compact subset in $U$ will still be finite.
Intuitively, Theorem \ref{thm: main} says that concentrations of vertices can only happen near the boundary of $U$ as long as the area of the tropical curve is bounded.

Let $(x,y)$ be coordinates on $\R^2$. Let $C$ be the convex hull of the four points $(0,0), (0,1), (1,0), (1,1)$ in $\R^2$ and let $U$ be the interior of $C$.
Our tropical curve $G$ consists of the following segments (they are all taken after intersection with $U$):
\begin{enumerate}[(i)]\itemindent=5mm
\item the segment $[(4^{-n},4^{-n}),(4^{-(n-1)},4^{-(n-1)})]$ with multiplicity $2^{n-1}$,
\item the ray starting at the point $(4^{-n},4^{-n})$ with direction $(-1,0)$ and multiplicity $2^{n-1}$,
\item the ray starting at the point $(4^{-n},4^{-n})$ with direction $(0,-1)$ and multiplicity $2^{n-1}$,
\item the ray starting at the point $(4^{-n},4^{-n})$ with direction $(-1,2)$ and multiplicity $2^{n}$,
\item the ray starting at the point $(4^{-n},4^{-n})$ with direction $(2,-1)$ and multiplicity $2^{n}$,
\end{enumerate}
where $n$ is taken over all positive integers (see Figure \ref{fig:example}).
One checks that the balancing condition is verified (see Definition \ref{def: tropical curve}(\ref{item: balancing condition})).

\begin{figure}[H]
\centering
\begin{tikzpicture}[scale=4]
\draw [gray] (0, 0) rectangle (1, 1);
\draw (0,0) -- (1,1);
\foreach \n in {1,...,10}
{
\draw ({pow(4, -\n)}, {pow(4, -\n)}) -- (0, {pow(4, -\n)});
\draw ({pow(4, -\n)}, {pow(4, -\n)}) -- (0, {pow(4, -\n)*3});
\draw ({pow(4, -\n)}, {pow(4, -\n)}) -- ({pow(4, -\n)},0);
\draw ({pow(4, -\n)}, {pow(4, -\n)}) -- ({pow(4, -\n)*3},0);
}
\end{tikzpicture}
\caption{\label{fig:example}}
\end{figure}

\begin{proposition}
We have $\Area(G) = 14$.
\end{proposition}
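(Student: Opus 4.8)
The plan is to compute, for each fixed $n$, the tropical area contributed by each of the five families of edges listed in (i)--(v), and then sum the resulting series over all $n\ge 1$. Throughout I use that $\Area(e)=|e|\cdot|w_e|$, where $|w_e|$ equals the multiplicity of $e$ times the Euclidean norm of the primitive integral vector in its direction; equivalently, I may use the formula $\Area(e)=|\lambda|\cdot\sum_i (w^i)^2$ from the Example above as a cross-check.

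For the diagonal segment (i), the endpoints are $4^{-n}(1,1)$ and $4^{-(n-1)}(1,1)$, so its length is $\big(4^{-(n-1)}-4^{-n}\big)\sqrt2=3\cdot 4^{-n}\sqrt2$; with primitive direction $(1,1)$ of norm $\sqrt2$ and multiplicity $2^{n-1}$, its area is $3\cdot 4^{-n}\sqrt2\cdot 2^{n-1}\sqrt2=3\cdot 2^{-n}$.

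The delicate point is to determine the portion of each \emph{unbounded} ray that actually lies in $U$, since the rays are truncated by the boundary of $C$. For the horizontal ray (ii) from $(4^{-n},4^{-n})$ in direction $(-1,0)$, the first coordinate decreases from $4^{-n}$ to $0$, so the piece inside $U$ has length $4^{-n}$; with multiplicity $2^{n-1}$ its area is $4^{-n}\cdot 2^{n-1}=\tfrac12\cdot 2^{-n}$, and the vertical ray (iii) gives the same value by symmetry. For the slanted ray (iv) in direction $(-1,2)$, I parametrize it as $(4^{-n}-t,\,4^{-n}+2t)$ and check that it leaves the square through the left edge $x=0$, at parameter $t=4^{-n}$ reaching the point $(0,3\cdot 4^{-n})$ with $3\cdot 4^{-n}<1$, before it could reach the top edge $y=1$; hence the piece inside $U$ has Euclidean length $4^{-n}\sqrt5$, and with multiplicity $2^n$ and primitive norm $\sqrt5$ its area is $4^{-n}\sqrt5\cdot 2^n\sqrt5=5\cdot 2^{-n}$. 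Ray (v) gives the same by the symmetry exchanging the two coordinates.

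Summing over all $n\ge1$, the total contribution for a fixed index is $\big(3+\tfrac12+\tfrac12+5+5\big)2^{-n}=14\cdot 2^{-n}$, whence
\[\Area(G)=14\sum_{n=1}^\infty 2^{-n}=14.\]
The only genuine obstacle is the truncation analysis for the slanted rays (iv) and (v): one must verify that they exit through the correct edge of the square, so that their in-$U$ length is $4^{-n}\sqrt5$ rather than the longer distance to the top (respectively right) edge. Once this is settled, the remainder is a routine summation of geometric series.
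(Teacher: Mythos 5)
Your proof is correct and takes essentially the same approach as the paper's: compute, for each fixed $n$, the contributions $3\cdot 2^{-n}$, $2^{-(n+1)}$, $2^{-(n+1)}$, $5\cdot 2^{-n}$, $5\cdot 2^{-n}$ of the five families of edges and sum the resulting geometric series to get $14$. The only difference is that you make explicit the truncation analysis showing the slanted rays exit $U$ through the edges $x=0$ (resp.\ $y=0$) at the point with coordinate $3\cdot 4^{-n}<1$, a verification the paper leaves implicit.
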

\begin{proof}
For each integer $n\geq 1$, the segments from (i)-(v) contribute to $\Area(G)$ by $3\cdot 2^{-n}, \ 2^{-(n+1)}, \ 2^{-(n+1)}, \ 5\cdot 2^{-n}, \ 5\cdot 2^{-n}$ respectively.
Summing over all $n\geq 1$ we get $\Area(G) = 14$.
\end{proof}


\begin{thebibliography}{10}

\bibitem{Arbarello_Geometry_1985}
E.~Arbarello, M.~Cornalba, P.~A. Griffiths, and J.~Harris.
\newblock {\em Geometry of algebraic curves. {V}ol. {I}}, volume 267 of {\em
  Grundlehren der Mathematischen Wissenschaften [Fundamental Principles of
  Mathematical Sciences]}.
\newblock Springer-Verlag, New York, 1985.

\bibitem{Baker_Nonarchimedean_2011}
Matthew Baker, Sam Payne, and Joseph Rabinoff.
\newblock Nonarchimedean geometry, tropicalization, and metrics on curves.
\newblock {\em arXiv preprint arXiv:1104.0320}, 2011.

\bibitem{Berkovich_Spectral_1990}
Vladimir~G. Berkovich.
\newblock {\em Spectral theory and analytic geometry over non-{A}rchimedean
  fields}, volume~33 of {\em Mathematical Surveys and Monographs}.
\newblock American Mathematical Society, Providence, RI, 1990.

\bibitem{Chambert-Loir_Formes_2012}
Antoine Chambert-Loir and Antoine Ducros.
\newblock Formes diff\'erentielles r\'eelles et courants sur les espaces de
  berkovich.
\newblock {\em arXiv preprint arXiv:1204.6277}, 2012.

\bibitem{Gross_Logarithmic_2013}
Mark Gross and Bernd Siebert.
\newblock Logarithmic {G}romov-{W}itten invariants.
\newblock {\em J. Amer. Math. Soc.}, 26(2):451--510, 2013.

\bibitem{Grothendieck_Techniques_1995}
Alexander Grothendieck.
\newblock Techniques de construction et th\'eor\`emes d'existence en
  g\'eom\'etrie alg\'ebrique. {IV}. {L}es sch\'emas de {H}ilbert.
\newblock In {\em S\'eminaire {B}ourbaki, {V}ol.\ 6}, pages Exp.\ No.\ 221,
  249--276. Soc. Math. France, Paris, 1995.

\bibitem{Mikhalkin_Enumerative_2005}
Grigory Mikhalkin.
\newblock Enumerative tropical algebraic geometry in {$\Bbb R^2$}.
\newblock {\em J. Amer. Math. Soc.}, 18(2):313--377, 2005.

\bibitem{Nishinou_Toric_2006}
Takeo Nishinou and Bernd Siebert.
\newblock Toric degenerations of toric varieties and tropical curves.
\newblock {\em Duke Math. J.}, 135(1):1--51, 2006.

\bibitem{Speyer_Uniformizing_2007}
David~E Speyer.
\newblock Uniformizing tropical curves {I}: genus zero and one.
\newblock {\em arXiv preprint arXiv:0711.2677}, 2007.

\bibitem{Yu_Balancing_2013}
Tony~Yue Yu.
\newblock Balancing conditions in global tropical geometry.
\newblock {\em arXiv preprint arXiv:1304.2251}, 2013.

\bibitem{Yu_Tropicalization_2014}
Tony~Yue Yu.
\newblock Tropicalization of the moduli space of stable maps.
\newblock {\em arXiv preprint arXiv:1407.8444}, 2014.

\end{thebibliography}

\end{document}